\newcommand{\ve}{\varepsilon}
\newcommand{\be}{\begin{equation}}
\newcommand{\ee}{\end{equation}}
\newcommand{\ba}{\begin{align}}
\newcommand{\ea}{\end{align}}
\newcommand{\abs}[1]{\lvert#1\rvert}
\newtheorem{theorem}{Theorem}[section]
\newtheorem{lemma}{Lemma}[section]
\def\beq{\begin{equation}}\def\enq{\end{equation}}
\title{Integer circulant determinants of order 15}
\author[B. Paudel]{Bishnu Paudel}
\address{ Department of Mathematics\\
         Kansas State University\\
         Manhattan, KS 66506, USA}
\email{bpaudel@ksu.edu, pinner@math.ksu.edu}
\author[C. Pinner]{Chris Pinner}
\thanks{This began as a K-State I-Center undergrad research project for Gonzalo Rodrigues Sanabria}
\keywords{group determinant, circulant determinant, Mahler measure}
\subjclass[2010]{Primary: 11R06, 15B36; Secondary: 11B83, 11C08,  11C20, 11G50, 11R09, 11T22, 43A40}
\date{\today}
\begin{document}

\begin{abstract} We consider the values taken by  $n\times n$ circulant determinants with integer entries
when $n$ is the product of two distinct odd primes $p,q$.
These correspond to the integer group determinants for $\mathbb Z_{pq}$, the cyclic group of order $pq$.
We show that $p^2$ and $q^2$ are not determinants (more generally we show that  the classic necessary divisibility conditions are never sufficient when $n$ contains at least two odd primes). We obtain a complete description of the integer group determinants for  $\mathbb Z_{15}$ (the smallest unresolved group) 
and  partial results for general $n=3p.$
\end{abstract}

\maketitle

\section{Introduction}\label{secIntroduction}
We recall that a {\em circulant determinant} is one where successive rows arise by a cyclic shift of the previous row one step to the right
$$ D(a_0,\dots ,a_{n-1}):=\det \begin{pmatrix} a_0 & a_1 &  \dots & a_{n-1} \\
a_{n-1} & a_0 & \cdots  & a_{n-2}\\
\vdots & \vdots  &  & \vdots \\
a_1 & a_2 & \cdots & a_0 \end{pmatrix}. $$ 
At the 1977  meeting of the American Mathematical Society in Hayward, California, Olga Taussky-Todd
asked which integers can be obtained as an integral $n\times n$ circulant determinant:
$$ S_n:=\{ D(a_0,\ldots ,a_{n-1}) \; : \; (a_0,\ldots ,a_{n-1})\in \mathbb Z^n\}. $$
For a finite group $G$, and $|G|$ variables $a_g$, $g\in G$, the group determinant is defined to be the polynomial  obtained by taking the determinant of the
matrix whose $ij$th entry is $a_{g_ig_{j}^{-1}}$. One can similarly ask what integer values the group determinants 
take when the variables $a_g$ are all integral.  The group determinant polynomial determines the group \cite{Formanek}, but
it remains open whether the integer values determine the group. 
Determining $S_n$ is plainly  the same as determining the integer group determinants  in the special case of the cyclic group  
$$\mathbb Z_n =\{0,1,\ldots ,n-1 \}\text{ mod } n. $$
In \cite{smallgps} a complete description of the integer group determinants was obtained for all groups with $|G|\leq 14$.
Partial results have been obtained for other families of finite groups, \cite{dihedral,Stian,dilum,pgroups,2gp,Cid2,S4}.
Here we consider the smallest unresolved group $\mathbb Z_{15}$, the  $15\times 15$ integer circulant determinants $S_{15}$.
As observed by Newman \cite{Newman1} (or using characters to factor the group determinant, see for example \cite{Conrad}),
we can write
$$  D(a_0,\dots ,a_{n-1})=M_{n}\left(a_0+a_1x+\cdots +a_{n-1}x^{n-1}\right) $$
where for a polynomial $F(x)$ in $\mathbb Z[x]$  we define
\be \label{DefM}  M_n(F) :=\prod_{j=1}^{n} F(\omega_n^j),\;\; \omega_n:= e^{2\pi i/n}. \ee
It will often be convenient to break this down as a product of integer norms
\be \label{defM_n} M_n(F):=\prod_{d\mid n} N_d(F),\;\; \; \;N_d(F): =\prod_{\stackrel{j=1}{\gcd(j,d)=1}}^d F(\omega_d^j), \ee
by dividing the $n$th roots of unity into  the various  primitive $d$th roots of unity.
We can think of $M_n(F)$ as the resultant of $F$ with $x^n-1$ and the $N_d(F)$ the resultants with its irreducible factors,
the $d$th cyclotomic polynomials:
$$  \Phi_n(x):=\prod_{\stackrel{j=1}{\gcd(j,n)=1}}^n \left(x-\omega_n^j\right), \;\;\;\; x^n-1=\prod_{d\mid n}\Phi_d(x). $$     
Note, we can recover a circulant determinant from a polynomial of degree $n$ or more by reducing mod $(x^n-1)$.
See \cite{Norbert,Norbert2,Lind} for a discussion of $\frac{1}{n}\log |M_n(F)|$ as a $\mathbb Z_n$ group generalization of the classical logarithmic Mahler Measure, \cite{Lalin} for an alternative approach. In \cite{Pigno1} the smallest non-trivial value, the counterpart of the classical Lehmer Problem \cite{Lehmer},  was obtained  for all cyclic groups of order less than $892,371,480$. 

Trivially $S_n$ is closed under multiplication (from \eqref{defM_n} or by multiplying 
elements $\sum_{g\in G} a_g g$ in the group ring).
Other elementary properties were obtained in Newman \cite{Newman1} \& Lacquer \cite{Laquer}. For example,
\be \label{basic}  \{ m\in \mathbb Z\;\; : \;\; \gcd(m,n)=1\} \cup n^2\mathbb Z \subset S_n,  \ee
to be explicit, $M_n(-x)=-1$ and  if $\gcd(m,n)=1$
$$ M_n\left( \prod_{p^{\alpha}\parallel m} \Phi_p(x)^{\alpha}\right)=|m|,\;\;\;\; M_n\left( 1-x +k \left(\frac{x^n-1}{x-1}\right)\right)=kn^2. $$ 
We shall frequently use that the absolute value of the  resultant of two cyclotomic polynomials $\Phi_k(x)$ and $\Phi_s(x)$, $k>s$, is $p^{\phi(s)}$ if $k=sp^{\alpha}$ and  one otherwise   (see for example   \cite{Apostol,ELehmer}).
Newman and Laquer also obtained the divisibility restrictions
\be \label{div} t\in S_n, \;\; p\mid t,\;\; p^{\alpha}\parallel n \;\; \Rightarrow \;\; p^{\alpha+1}\mid t. \ee
For odd primes $p$  this led them to a precise description of $S_p$ and $S_{2p},$
$$ S_p= \{ p^a m\;\; :\;\; \gcd(m,p)=1,\;\; a=0 \text{ or } a\geq 2\}   $$
and
$$ S_{2p}=\{2^ap^bm\;\; : \; \; \gcd(m,2p)=1,\;\; a=0 \text{ or } a\geq 2, \;\;b=0 \text{ or } b\geq 2\},$$
with \eqref{div} being an iff condition. While \eqref{div} is always sharp, in the sense that 
$$ p^{\alpha}\parallel n \;\; \Rightarrow \;\; p^{\alpha+j}\parallel M_n(x-1+p^j) \text{ for all } j\geq 1, $$
we will not in general obtain all multiples of $p^{\alpha+1}$. For example Newman \cite{Newman2} showed that $p^{\alpha+1}\not\in S_{p^{\alpha}}$ for any $\alpha\geq 2$ when $p\geq 5$. Here we show:

\begin{theorem} \label{generalisation}
Suppose that $n$ is divisible by two distinct odd primes $p$ and $q$ with $p^{\alpha}\parallel n$. Then $p^{\alpha+1}$ is not in $S_n.$
\end{theorem}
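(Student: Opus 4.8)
The plan is to suppose $p^{\alpha+1}=M_n(F)$ for some $F\in\mathbb Z[x]$ and derive a contradiction, reducing everything to the single base statement $p^{2}\notin S_{pq}$.

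\emph{Step 1 (shape of the factorization).} Write $n=p^{\alpha}m$ with $p\nmid m$; by hypothesis $q\mid m$. In $p^{\alpha+1}=\prod_{d\mid n}N_d(F)$ every factor divides $p^{\alpha+1}$, so $N_d(F)=\pm p^{e_d}$ with $\sum_{d\mid n}e_d=\alpha+1$. Using $\Phi_{p^{c}e}(x)\equiv\Phi_e(x)^{\phi(p^{c})}\pmod p$ for $p\nmid e$ one gets $N_{p^{c}e}(F)\equiv N_e(F)^{\phi(p^{c})}\pmod p$, so $p\mid N_e(F)$ forces $p\mid N_{p^{c}e}(F)$ for every $0\le c\le\alpha$. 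Thus each $e\mid m$ with $p\mid N_e(F)$ already produces $p^{\alpha+1}\mid M_n(F)$ through the chain $e,pe,\dots,p^{\alpha}e$; since $v_p(M_n(F))=\alpha+1$ there is exactly one such $e$, say $e_0$, we have $v_p(N_{p^{c}e_0}(F))=1$ for each $c$, and $N_d(F)=\pm1$ for every other $d\mid n$. Writing $N_{e_0}(F)=\Norm_{\mathbb Q(\omega_{e_0})/\mathbb Q}\!\bigl(F(\omega_{e_0})\bigr)$ and using that the $p$-adic valuation of a norm from $\mathbb Q(\omega_{e_0})$ is a nonnegative integer multiple of the residue degree $\mathrm{ord}_{e_0}(p)$, the equality $v_p(N_{e_0}(F))=1$ forces $\mathrm{ord}_{e_0}(p)=1$, i.e. $p\equiv1\pmod{e_0}$.

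\emph{Step 2 (reduction).} Reducing $F$ modulo $x^{pe_0}-1$ annihilates every $N_d$ except $N_{e_0}(F)$ and $N_{pe_0}(F)$, so the image $G$ satisfies $M_{pe_0}(G)=\pm p^{2}$ and hence $p^{2}\in S_{pe_0}$ (using $-1\in S_{pe_0}$ and multiplicativity). When $e_0$ has an odd prime factor $r$ we have $r\neq p$, so $pe_0$ carries two distinct odd primes with $p\parallel pe_0$; if moreover $pe_0<n$ this contradicts the theorem for $pe_0$ by induction on $n$. Similar reductions modulo $x^{k}-1$ for suitable divisors $k\mid n$, together with this induction, dispose of every instance except a few residual configurations closely tied to $n=pq$ itself, which must be handled directly. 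So the whole statement comes down to proving $p^{2}\notin S_{pq}$ for distinct odd primes $p,q$ (and its near relatives).

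\emph{Step 3 (the base case, expected to be the crux).} For $n=pq$, Step 1 leaves $(N_1,N_p,N_q,N_{pq})(F)$ equal either to $(\pm p,\pm p,\pm1,\pm1)$ (case $e_0=1$) or to $(\pm1,\pm1,\pm p,\pm p)$ (case $e_0=q$, which requires $p\equiv1\pmod q$); the latter is handled by a separate but parallel argument. In the first, $F(\omega_p)$ has norm $\pm p$, so it generates the unique ideal $(1-\omega_p)$ of norm $p$ and $F(\omega_p)=\pm\omega_p^{k}(1-\omega_p)$; combined with $F(1)=\pm p$ this pins the reduction $F\bmod(x^{p}-1)$ down to $\varepsilon_1 x^{k}(1-x)+\varepsilon_2\Phi_p(x)$ with $\varepsilon_1,\varepsilon_2\in\{\pm1\}$ and $0\le k\le p-2$. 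Writing $F$ as this fixed polynomial plus $(x^{p}-1)G(x)$ and substituting, the identities $\Phi_p(\omega_q)=(1-\omega_q^{\,p})/(1-\omega_q)$ (a cyclotomic unit) and $\Phi_p(\omega_{pq})=(\omega_q-1)\cdot(\text{a unit of }\mathbb Z[\omega_{pq}])$ let one express $F(\omega_q)$ and $F(\omega_{pq})$ as a fixed unit plus $(1-\omega_q)$ times a free quantity built from $G$, where $G(\omega_q)$ and $G(\omega_{pq})$ are linked only through the primes of $\mathbb Z[\omega_{pq}]$ above $p$ (residue degree $\mathrm{ord}_q(p)$, gluing index $p^{\phi(q)}$). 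The remaining task — to show that no choice of $G$ makes both $N_q(F)=\pm1$ and $N_{pq}(F)=\pm1$ — is the real obstacle: I expect it to require the arithmetic of $\mathbb Z[\omega_{pq}]$, in particular the structure of its unit group together with the identities above, and to split according to whether or not $p\equiv1\pmod q$, with $q=3$ (where $\mathbb Z[\omega_3]^{*}=\langle-\omega_3\rangle$ is finite) perhaps treated separately. This base-case impossibility, not the bookkeeping of Steps 1–2, is where the difficulty lies.
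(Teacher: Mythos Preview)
Your Step 1 is correct and matches the paper. After that the proposal diverges from the paper and, as written, does not go through.

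\textbf{Step 3 contains a false claim.} You write that $N_p(F)=\pm p$ forces $F(\omega_p)=\pm\omega_p^{k}(1-\omega_p)$, and hence that $F\bmod(x^p-1)$ is pinned down to $\varepsilon_1 x^{k}(1-x)+\varepsilon_2\Phi_p(x)$. This is only true for $p=3$. For $p\ge 5$ the unit group of $\mathbb Z[\omega_p]$ has positive rank, so $F(\omega_p)=(1-\omega_p)u$ with $u$ an \emph{arbitrary} unit; the reduction $F\bmod(x^p-1)$ is therefore not a four-parameter family but an infinite one. The subsequent parametrisation by $G$ and the plan to rule out all $G$ collapse at this point. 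You also leave the actual impossibility argument (``the real obstacle'') unproved.

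\textbf{Step 2 does not reduce to $n=pq$.} Reducing $F$ modulo $x^{pe_0}-1$ gives $p^2\in S_{pe_0}$, but induction on $n$ only helps when $pe_0<n$ \emph{and} $pe_0$ still has two odd prime divisors. If $e_0$ is $1$ or a power of $2$ the second condition fails; and when $\alpha=1$ and $e_0=m$ you get $pe_0=n$ and no reduction at all. So even granting the base case you have not disposed of, e.g., $n=pq^{\ell}$ with $e_0=q^{\ell}$.

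\textbf{What the paper actually does.} It first reduces in the \emph{other} direction: writing $n=p^{\alpha}q^{\ell}t$ with $\gcd(t,pq)=1$ and using $M_{st}(F)=M_s\bigl(\prod_j F(x\omega_t^j)\bigr)$ to assume $n=p^{\alpha}q^{\ell}$ outright. The key tool is then a Kronecker-type lemma: if $u(\omega_n)$ is a unit in $\mathbb Z[\omega_n]$ then $u(\omega_n^{-1})=\varepsilon\,\omega_n^{J}u(\omega_n)$ with $\varepsilon=\pm1$, and $\varepsilon=+1$ when $n$ is prime. In Case~(i) ($e_0=1$) one applies this to the unit $F(\omega_{pq})$ to get a polynomial identity modulo $\Phi_{pq}$, then evaluates at $\omega_p$ using $\Phi_{pq}(\omega_p)=q\Phi_p(\omega_p)^{-1}$; comparing the sign $\varepsilon$ coming from $\omega_{pq}$ with the sign $(-1)^{\delta}$ coming from $N_p(F)=p^{\delta}$ (via the same lemma at the prime level) yields a parity contradiction between $\delta=1$ at $p$ and $\delta'=0$ at $q$. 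Case~(ii) ($e_0=q^{\beta}$, $\beta\ge1$) is handled by the same circle of ideas applied to $\mathrm{Norm}_{\mathbb Q(\omega_{pq^{\beta}})/\mathbb Q(\omega_p)}F(\omega_{pq^{\beta}})$. No induction and no explicit parametrisation of $F$ is needed; the whole argument is a sign comparison forced by complex conjugation of units.
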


We concentrate on the case  $n=pq$ where  $p$ and $q$ are distinct odd primes. 
From \eqref{div} the only determinants, in addition to \eqref{basic}, must be of 
the form $p^2m,$ $q\nmid m$ and $q^2m$, $p\nmid m$. But from Theorem \ref{generalisation} not all integers of this form with be determinants, for example $p^2$ and $q^2$ are not themselves determinants. 
In all cases of $n=3p$ (and a couple of cases of $n=5p$ that we tested computationally) we do though obtain all such multiples of $p^3$ and $q^3$:

\begin{theorem}\label{cubesthm}  For $n=3p$ or 35 or 55 we have
\be \label{cubes} \{p^3m\; \:\; \gcd(m,q)=1\} \cup \{ q^3m\; :\; \gcd(m,p)=1\}\subset S_{pq}. \ee
\end{theorem}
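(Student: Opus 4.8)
Since $S_{pq}$ is closed under multiplication, contains $-1=M_{pq}(-x)$, and contains every integer coprime to $pq$ by \eqref{basic}, the inclusion \eqref{cubes} reduces to showing that $p^{a}\in S_{pq}$ and $q^{a}\in S_{pq}$ for every $a\geq 3$: any $p^{3}m$ with $\gcd(m,q)=1$ equals $\pm p^{a}t$ for some $a\geq 3$ and some $t$ coprime to $pq$, hence is the product $(\pm p^{a})(\pm t)$ of two members of $S_{pq}$. So fix $a\geq 3$; I will describe how to produce $F\in\mathbb{Z}[x]$ with $M_{pq}(F)=\pm p^{a}$, the construction of $q^{a}$ being symmetric in $p$ and $q$.

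By \eqref{defM_n}, $M_{pq}(F)=F(1)\,N_{p}(F)\,N_{q}(F)\,N_{pq}(F)$, and since $F(\omega_{p})\equiv F(1)\pmod{1-\omega_{p}}$ we have $p\mid F(1)$ iff $(1-\omega_{p})\mid F(\omega_{p})$ iff $p\mid N_{p}(F)$. The plan is therefore to arrange
\[
F(1)=\pm p^{\,a-1},\qquad N_{p}(F)=\pm p,\qquad N_{q}(F)=N_{pq}(F)=\pm1,
\]
so that $M_{pq}(F)=\pm p^{a}$. For $a=2$ this would produce $p^{2}\in S_{pq}$, which Theorem~\ref{generalisation} forbids; correspondingly the construction below can be carried out only for $a\geq 3$. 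The last two requirements say exactly that $F(\omega_{q})$ and $F(\omega_{pq})$ are units in $\mathbb{Z}[\omega_{q}]$ and in $\mathbb{Z}[\omega_{pq}]$.

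The device that trivialises these two unit conditions is the polynomial
\[
\Psi(x):=\Phi_{q}(x^{p})=1+x^{p}+x^{2p}+\cdots+x^{(q-1)p}=\frac{x^{pq}-1}{x^{p}-1},
\]
of degree $p(q-1)<pq$. If $\zeta$ is a primitive $q$th or primitive $pq$th root of unity then $\zeta^{p}$ is a primitive $q$th root of unity, so $\Psi(\zeta)=\Phi_{q}(\zeta^{p})=0$, whereas $\Psi(1)=\Psi(\omega_{p})=\Phi_{q}(1)=q$. Taking $F(x)=G(x)+\Psi(x)H(x)$ with $G,H\in\mathbb{Z}[x]$ we get $F(\omega_{q})=G(\omega_{q})$ and $F(\omega_{pq})=G(\omega_{pq})$, so it is enough to choose $G$ with $G(\omega_{q})$ and $G(\omega_{pq})$ both units — for instance a monomial $\pm x^{j}$, or $G(x)=1+x+\cdots+x^{s-1}$ with $\gcd(s,pq)=1$, for which $G(\omega_{q})=\tfrac{\omega_{q}^{s}-1}{\omega_{q}-1}$ and $G(\omega_{pq})=\tfrac{\omega_{pq}^{s}-1}{\omega_{pq}-1}$ are cyclotomic units. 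Only the behaviour at $x=1$ and at $x=\omega_{p}$ then remains: first, $F(1)=G(1)+qH(1)$, which we make equal to $\pm p^{\,a-1}$ by choosing $s$ coprime to $pq$ with $s\equiv p^{\,a-1}\pmod q$ (possible by the Chinese remainder theorem) and then setting $H(1)=(p^{\,a-1}-s)/q$; second, $N_{p}(F)=\pm p$, i.e.\ $F(\omega_{p})=G(\omega_{p})+qH(\omega_{p})$ must equal $(1-\omega_{p})$ times a unit of $\mathbb{Z}[\omega_{p}]$.

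This last requirement is the heart of the matter and I expect it to be the main obstacle. That $F(\omega_{p})$ has the correct $(1-\omega_{p})$-adic valuation $1$ falls out automatically from the choice of $H(1)$; the real work is to guarantee that the cofactor $F(\omega_{p})/(1-\omega_{p})$ is a genuine unit of $\mathbb{Z}[\omega_{p}]$ and not merely prime to $p$. One pushes this through using the freedom still available in $H$ (its higher coefficients), and the freedom to enlarge $s$, against the structure of the cyclotomic units $\tfrac{1-\omega_{p}^{k}}{1-\omega_{p}}$, which surject onto $\mathbb{F}_{p}^{\times}$ — and it is exactly this step that breaks down for $a=2$. The construction of $q^{a}$ is the mirror image, built from $\Phi_{p}(x^{q})=(x^{pq}-1)/(x^{q}-1)$ and controlling $F$ at $1$ and $\omega_{q}$ instead. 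Finally, the reason the argument runs uniformly only for $q=3$, with $n=35$ and $n=55$ instead settled by direct computation, is that it is precisely for $q=3$ — where $(\mathbb{Z}/q)^{\times}=\{\pm1\}$ and $\mathbb{Z}[\omega_{q}]$ is Euclidean with the smallest possible unit group — that the congruence and cofactor conditions above can be met for every $p$; for larger $q$ they can genuinely fail.
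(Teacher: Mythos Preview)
Your reduction to $p^{a},q^{a}\in S_{pq}$ for $a\geq 3$ is fine, and using $\Psi(x)=\Phi_{q}(x^{p})$ to kill the contributions at the primitive $q$th and $pq$th roots is a clean device. But there is a genuine gap exactly where you say the heart of the matter lies, and you do not close it. Once $G$ and $H(1)$ are fixed, $F(\omega_{p})$ ranges over the single coset $G(\omega_{p})+qH(1)+q(1-\omega_{p})\mathbb{Z}[\omega_{p}]$; dividing out the factor $(1-\omega_{p})$, the question becomes whether a fixed coset $\gamma_{0}+q\mathbb{Z}[\omega_{p}]$ contains a global unit of $\mathbb{Z}[\omega_{p}]$. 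That is a statement about the image of $\mathbb{Z}[\omega_{p}]^{\times}\to(\mathbb{Z}[\omega_{p}]/q)^{\times}$, and it is not automatic---your remark that cyclotomic units surject onto $\mathbb{F}_{p}^{\times}$ concerns reduction modulo $(1-\omega_{p})$, which is the wrong quotient here. Varying $s$ moves $\gamma_{0}$ as well as $G(\omega_{p})$, so the ``extra freedom'' cannot simply be invoked without a real argument. Your closing sentence about why $q=3$ is special is speculation rather than a mechanism; nothing in your setup distinguishes $q=3$ from $q=5$, and indeed the same computation modulo $(1-\omega_{p})^{2}$ goes through for $a=2$ as for $a\geq 3$, so your framework does not itself explain the obstruction at $a=2$.

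The paper proceeds completely differently and avoids unit-group questions by writing down explicit polynomials. For $n=3p$, Lemma~\ref{ppower} exhibits a single $F$ (depending on $m$) with $N_{1}(F)=mp$, $N_{3}(F)=1$, $N_{p}(F)=p^{2}$, $N_{3p}(F)=1$, hence $M_{3p}(F)=p^{3}m$ directly for every $m$ with $3\nmid m$; Lemma~\ref{3power} similarly realises $3^{4}m$ and $3^{3}m$. For $n=35$ and $55$ the paper simply records polynomials with $M_{n}=5^{3},7^{3},11^{3}$ and then uses the identity
\[
M_{n}\!\left(\frac{x^{k}-1}{x-1}\,F(x)+\lambda\,\frac{x^{n}-1}{x-1}\right)=\frac{kF(1)+\lambda n}{F(1)}\,M_{n}(F)
\]
to manufacture all the required multiples. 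The paper's argument is thus a direct verification once the seed polynomials are found; yours would require a nontrivial statement about units in $\mathbb{Z}[\omega_{p}]$ modulo $q$ that you have not supplied.
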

This follows immediately from the lemmas in Section \ref{p^3} below.

\vspace{1ex}
\noindent
{\bf Question 1.}  Does  \eqref{cubes} in fact hold for all $n=pq$?

\vspace{1ex}
Determining precisely which multiples $3^2m$ and $p^2m$, $\gcd(m,30)=1$,  are determinants for $n=3p$ would seem to be a much harder 
problem.
%For the $n$ covered by Theorem \ref{cubesthm} it remains then to determine which multiples of $p^2$ and $q^2$ are determinants:
%$$S_{pq}^*(p)=\{p^2m\in S_{pq}\;:\; \gcd(m,pq)=1\},\;\; S^*_{pq}(q)=\{q^2m\in S_{pq}\; :\; \gcd(m,pq)=1\}. $$
%Certainly we no longer expect to obtain all multiples. 
Our goal here is only to make this explicit for $n=15$. It helps here that we have uniqueness of factorisation in all the 
underlying cyclotomic extensions $\mathbb Z [\omega_{n}]$, $n=3,5$ and 15 (see for example Washington \cite[Theorem 11.1]{Washington}).  It will require us to divide  the primes $p\equiv 1$ mod 15 into two classes according to  their representation as a $15$-norm; we shall see  that every  $p\equiv 1$ mod 15 can be written in the form
$$ p= N_{15}\left(  (x^5-1)\pm x^j \Phi_3(x) B(x) +(x-1)\Phi_3(x)\Phi_5(x)g(x)\right) $$
for some $g(x)$ in $\mathbb Z[x]$, $0\leq j<15$,  and either  $B(x)=1$ (we shall call these primes {\em good}) or $B(x)=(x-1)$
(we shall call these primes {\em bad}).

\begin{theorem}\label{Z15}
The determinants in  $S_{15}$ take the form $15^2\mathbb Z$ or $m$ or $3^{t}m,5^tm$ with $t\geq 3$,
%$S^*_{15}(3)$ and $S^*_{15}(5)$ take the form 
or $3^2k$, $5^2k$ with
\begin{itemize}
\item[(i)] $k=mp$, $p\equiv 7,11$ or $13$ mod $15$, or  a ``good" $p\equiv 1$ mod 15, or
\item[(ii)] $k=mp^2$, $p\equiv 4$ mod 15,  or
\item[(iii)] $k=mp^4$, $p\equiv 2$ or $8$ mod $15$,
\end{itemize}
where $m$ can be any integer coprime to 15 and $p$ denotes a prime.

\end{theorem}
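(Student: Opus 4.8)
The plan is to determine $S_{15}$ by combining the known structural constraints with explicit constructions, organizing everything around the prime factorization of $|M_{15}(F)|=|N_1(F)\,N_3(F)\,N_5(F)\,N_{15}(F)|$. First I would record the easy cases: $15^2\mathbb Z\subset S_{15}$ and all $m$ with $\gcd(m,15)=1$ lie in $S_{15}$ by \eqref{basic}, and by \eqref{div} any determinant divisible by $3$ is divisible by $9$ and any divisible by $5$ is divisible by $25$; Theorem \ref{cubesthm} gives all multiples of $3^3$ and $5^3$ coprime to the other prime. So the entire content is the analysis of determinants of the form $3^2k$ or $5^2k$ with $\gcd(k,5)=1$ (resp.\ $\gcd(k,3)=1$) but $27\nmid$ (resp.\ $125\nmid$) the value — equivalently, understanding exactly which $k$ arise. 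The symmetry $3\leftrightarrow 5$ is not literal but the arguments will run in parallel, so I would do the $3^2$ case in detail and indicate the changes for $5^2$.

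The heart of the matter is a prime $p\equiv 1\pmod{15}$ or $p$ in one of the residue classes $2,4,7,8,11,13\pmod{15}$, and the key object is the resultant $N_{15}(F)$, which is a norm from $\mathbb Z[\omega_{15}]$. Using unique factorization in $\mathbb Z[\omega_{15}]$ (cited from Washington), a prime $p\nmid 15$ splits in $\mathbb Z[\omega_{15}]$ according to the order $f$ of $p$ in $(\mathbb Z/15)^\times$, into $\phi(15)/f=8/f$ primes each of norm $p^f$. Thus the exact power of $p$ that can divide $N_{15}(F)$ — and hence the smallest power of $p$ by which $3^2m$ can be multiplied and still be a norm of the required shape — is governed by $f$: $f=1$ for $p\equiv 1$, $f=2$ for $p\equiv 4$, $f=4$ for $p\equiv 2,8$, and $f=4$ for $p\equiv 7,11,13$ (these have order $4$ as well, but lie in the index-$2$ subgroup issue). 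This is exactly what produces the trichotomy $mp$, $mp^2$, $mp^4$ in the theorem. I would make precise the claim that $3^2m$ with $m$ built from primes of order $f$ forces $p^f$-divisibility, by tracking how a prime ideal above $p$ in $\mathbb Z[\omega_{15}]$ must divide $F(\omega_{15})$ together with the constraint that $F$ also realizes the $3^2$ in $N_3$, $N_5$, $N_1$ — the congruence $F\equiv$ (something) forcing the Galois orbit of that ideal all to divide.

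For the constructions (showing these values \emph{are} attained), I would use the template from Section \ref{p^3}: write $F(x)=(x^5-1)\pm x^j\Phi_3(x)B(x)+(x-1)\Phi_3(x)\Phi_5(x)g(x)$, compute that $N_3(F)$ and $N_5(F)$ each contribute a factor $3^2$ resp.\ $5^2$ (the $(x^5-1)$ term vanishes at primitive cube roots up to controllable factors, etc.), that $N_1(F)=F(1)$ can be steered to pick up any coprime $m$, and that $N_{15}(F)$ is the norm displayed before the theorem statement; then invoke the Chebotarev/Dirichlet-type fact that every $p\equiv 1\pmod{15}$ is represented by one of the two norm forms ($B=1$ ``good'' or $B=x-1$ ``bad''), and similarly handle $p\equiv 2,4,7,8,11,13$ by taking $F$ to hit a prime above $p$ of the appropriate residual degree. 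The main obstacle I expect is precisely this representation claim: proving that the norm form $N_{15}((x^5-1)\pm x^j\Phi_3(x)B(x)+(x-1)\Phi_3(x)\Phi_5(x)g(x))$ represents \emph{every} prime $\equiv 1\pmod{15}$ with $B\in\{1,x-1\}$ — i.e.\ that the good/bad dichotomy is exhaustive — which amounts to showing that the relevant ideals in $\mathbb Z[\omega_{15}]$ (or a suitable order) are principal with a generator of the prescribed form, and controlling the unit ambiguity; this is where the explicit cyclotomic arithmetic, the class number one hypothesis, and a careful choice of $j$ and $g$ all have to be brought together. The converse direction — that no other $k$ occur — is then a finite check modulo the divisibility and norm-degree constraints already assembled, together with ruling out small exceptional combinations by the resultant-of-cyclotomics formula quoted in the introduction.
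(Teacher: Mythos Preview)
Your outline correctly isolates the reduction to values of the form $3^2k$ or $5^2k$ with $\gcd(k,15)=1$, but the mechanism you propose for the trichotomy is wrong, and this causes the argument to fail in both directions.

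You say the exponents $1,2,4$ on $p$ in cases (i)--(iii) come from the residual degree $f$ of $p$ in $\mathbb Z[\omega_{15}]$, i.e.\ the order of $p$ in $(\mathbb Z/15)^\times$. But $p\equiv 11\pmod{15}$ has order~$2$, not~$4$, yet it sits in case~(i) where only a first power of $p$ is needed; and $p\equiv 14\pmod{15}$ also has order~$2$ yet does \emph{not} appear in the list at all, meaning $3^2p^2$ is not a determinant for such $p$. So the residual degree of $p$ is neither necessary nor sufficient information. What actually governs the exponent is the good/bad invariant you mention only in passing: the paper proves that an element of $\mathbb Z[\omega_{15}]$ (up to units) is good or bad but never both, that this behaves like a parity under multiplication, and then computes for each residue class whether the relevant prime power $p^f=N_{15}(\mathscr P)$ is good or bad. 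It is good for $p\equiv 4$ (at $p^2$) and for $p\equiv 2,8$ (at $p^4$), but bad for $p\equiv 14$ (at $p^2$) and trivially bad for $p\equiv 7,13$ (at $p^4$, since a fourth power is bad). That, not $f$, is where (ii) and (iii) come from.

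The second gap is structural: the primes $p\equiv 7,11,13\pmod{15}$ do \emph{not} enter through $N_{15}(F)$ at all. The paper's necessity argument (its Section~5 theorem) shows that if $M_{15}(F)=3^2k$ or $5^2k$ then either $N_{15}(F)$ is $15$-norm good, \emph{or} $N_3(F)$ contains a prime $\equiv 7,13\pmod{15}$, \emph{or} $N_5(F)$ contains a prime $\equiv 11\pmod{15}$. This is proved by a congruence argument on the Type~1/Type~2 decomposition of $F$: when the decomposition is of bad type one reads off $N_3(F)\bmod 5$ and $N_5(F)\bmod 3$ and forces such a prime. The corresponding constructions for $3^2p$ and $5^2p$ with $p\equiv 7,11,13$ are built by placing $p$ inside $N_3$ or $N_5$ (using tailored norm representations in $\mathbb Z[\omega_3]$ or $\mathbb Z[\omega_5]$), not inside $N_{15}$. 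Your plan, which routes everything through $N_{15}$ and treats the converse as ``a finite check modulo the divisibility and norm-degree constraints,'' misses this alternative pathway entirely; without it you can neither construct $3^2\cdot 11$ nor exclude $3^2p^2$ for $p\equiv 14$.
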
 
%Notice that $3^2$ and $5^2$ are not  $15\times 15$ integer circulant determininants.

The downside is that it is not immediately obvious which primes $p\equiv 1$ mod 15 are good.
For example the {\em good} $p\leq 5000$, for  which we can obtain $3^2p$ and $5^2p$, are
\begin{align*} & 31,151,181,421,601,661,691,751,811,1051,1171,1231,1291,1321,1531,1621,1741,\\
& 1831,1861,2221,2281,2371, 2521, 2551, 2971, 3061, 3181,  3271, 3301, 3361, 3391, 3511, \\
& 3691, 4051, 4111, 4201, 4231, 4561, 4621, 4831, 4951.
\end{align*}
and the {\em bad} primes
\begin{align*} &  61,211,241,271,331,541,571,631,991,1021,1201,1381,1471,1801,1951,2011,2131,\\
& 2161,2251,2311, 2341, 2671, 2791, 2851, 3001, 3121, 3331, 3541, 3571, 3631, 3931, 4021, \\
& 4261, 4441, 4591, 4651, 4801, 4861.
\end{align*}

The complexity that we encountered for $n=3p$ for $p=5$ did not make us want to attempt this for larger $p$,
although $\mathbb Z [\omega_{3p}]$ does have uniqueness of factorisation for $p=7$ or 11.

\section{Proof of Theorem \ref{generalisation}}\label{p^2}

\begin{lemma} \label{Kron} If $u(x)$ is in $\mathbb Z[x]$ and $u(\omega_{n})$ is a unit in $\mathbb Z[\omega_{n}]$ then
$u(\omega_{n}^{-1})=\ve \omega_{n}^J u(\omega_{n})$ for some integer $J\geq 0$ and $\ve=\pm 1$. 
If $n=p$ is prime then $\ve=+1$.

\end{lemma}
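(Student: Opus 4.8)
The plan is to use complex conjugation together with Kronecker's theorem on algebraic integers lying on the unit circle. First observe that since $u(x)$ has real (indeed integer) coefficients and $\omega_n^{-1}=\overline{\omega_n}$, we have $u(\omega_n^{-1})=\overline{u(\omega_n)}$. Write $\alpha:=u(\omega_n)$; by hypothesis $\alpha$ is a unit in $\mathbb Z[\omega_n]$, and hence so is $\overline{\alpha}=u(\omega_n^{-1})$, since complex conjugation is a ring automorphism of $\mathbb Z[\omega_n]$. Therefore $\beta:=\alpha/\overline{\alpha}$ is an algebraic integer (in fact a unit) lying in $\mathbb Q(\omega_n)$.

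Next I would check that every Galois conjugate of $\beta$ has absolute value $1$. The group $\mathrm{Gal}(\mathbb Q(\omega_n)/\mathbb Q)$ is abelian, consisting of the maps $\sigma_k:\omega_n\mapsto\omega_n^k$ with $\gcd(k,n)=1$, and complex conjugation $\sigma_{-1}$ is central. Hence $\sigma_k(\beta)=u(\omega_n^k)/u(\omega_n^{-k})=u(\omega_n^k)/\overline{u(\omega_n^k)}$, which (being a ratio of a nonzero complex number and its conjugate) has modulus $1$. By Kronecker's theorem an algebraic integer all of whose conjugates lie on the unit circle is a root of unity, so $\beta$ is a root of unity in $\mathbb Q(\omega_n)$; since the roots of unity in a cyclotomic field $\mathbb Q(\omega_n)$ are exactly $\pm\omega_n^j$, we get $\alpha/\overline{\alpha}=\ve\,\omega_n^j$ with $\ve=\pm1$. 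Rearranging, and using $\ve^{-1}=\ve$ and reducing the exponent modulo $n$, gives $u(\omega_n^{-1})=\overline{\alpha}=\ve\,\omega_n^{J}u(\omega_n)$ for some integer $0\le J<n$, as claimed.

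For the prime case $n=p$ (the statement is trivial when $p=2$, since $\omega_2=-1$), I would reduce the identity $u(\omega_p^{-1})=\ve\,\omega_p^{J}u(\omega_p)$ modulo the ramified prime $\mathfrak p=(1-\omega_p)$ lying above $p$, where $\mathbb Z[\omega_p]/\mathfrak p\cong\mathbb F_p$ and $\omega_p\equiv1\pmod{\mathfrak p}$. Because $\alpha=u(\omega_p)$ is a unit it maps to a nonzero element of $\mathbb F_p$, so $u(1)\not\equiv0\pmod p$; and both $u(\omega_p)$ and $u(\omega_p^{-1})$ reduce to $u(1)$ modulo $\mathfrak p$. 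The identity therefore forces $(\ve-1)\,u(1)\equiv0\pmod{\mathfrak p}$, hence $\ve\equiv1\pmod{\mathfrak p}$, i.e.\ $\ve-1\in\mathfrak p\cap\mathbb Z=p\mathbb Z$; as $p$ is odd this is impossible unless $\ve=+1$.

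The main (really the only substantive) obstacle is the passage through Kronecker's theorem: one must know that $\alpha/\overline{\alpha}$ is an algebraic integer all of whose conjugates have modulus one — which here is painless because complex conjugation commutes with the rest of the Galois action (the Galois group being abelian) — and that the only roots of unity in $\mathbb Q(\omega_n)$ are $\pm\omega_n^j$. Everything else is bookkeeping with the identity $u(\omega_n^{-1})=\overline{u(\omega_n)}$.
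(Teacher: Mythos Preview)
Your proof is correct and follows essentially the same route as the paper's: both form the ratio $u(\omega_n^{-1})/u(\omega_n)$, observe that it and all its conjugates have absolute value $1$, and invoke Kronecker's theorem to conclude it is a root of unity in $\mathbb Q(\omega_n)$, hence of the form $\pm\omega_n^J$. For the prime case the paper raises the identity to the $p$th power and reduces mod $p$ (using $u(\omega_p)^p\equiv u(1)$), whereas you reduce directly modulo $\mathfrak p=(1-\omega_p)$; this is the same idea carried out a bit more cleanly, since working in the residue field $\mathbb F_p$ avoids the detour through Frobenius.
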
 

\begin{proof} Since $u(\omega_{n})$ is a unit we know that $\alpha=u(\omega_{n}^{-1})/u(\omega_{n})$ is
an algebraic integer with $|\alpha|=1$. The same is true for all its conjugates and hence  $\alpha$, by Kronecker's Theorem \cite{Kronecker}, must be a root of unity in $\mathbb Z[\omega_{n}].$ So  $\alpha=\pm \omega_{n}^J$ for some integer $J\geq 0$.

If $n=p$ is prime, then $u(\omega_p)^p,u(\omega^{-1}_p)^p\equiv u(1)$ (mod $p$), and we can rule out $\ve=-1,$ since then
$ 0=u(\omega_p^{-1})^p+ u(\omega_p)^p \equiv 2u(\omega_p)^p \text{ mod }p,$
but $|u(\omega_p)|_p=1$.
\end{proof}

\begin{proof}[Proof of Theorem \ref{generalisation}] Since for $\gcd(s,t)=1$ and $F$ in $\mathbb Z[x]$  we have
$$ M_{st}(F)=M_s(G),\;\;\; G(x)=\prod_{j=1}^t F(x\omega_t^j)\in \mathbb Z[x], $$
we can assume that $n=p^{\alpha}q^{\ell}$. Suppose that $F(x)$ is a polynomial in $\mathbb Z[x]$ of degree $d$ with $M_{n}(F)=p^{\alpha+1}$.

Since for $\gcd(r,p)=1$ we have $N_{rp^j }(F)\equiv N_r(F)^{\phi(p^j)}$ mod $p,$ we readily see that $M_{p^kq^{\ell}}(F)=p^{\alpha+1}$
can only happen when $N_{p^j q^{\beta}}(F)=p$ for some $0\leq \beta \leq \ell$ and all $0\leq j\leq \alpha,$ with $N_{p^jq^s}(F)=1$ if $s\neq \beta$.
We split into two cases, $\beta=0$ and $\beta \geq 1$. Notice that the second can only happen when 
$p=N_{q^{\beta}}(F)\equiv F(1)^{\phi(q^{\beta})} \equiv  1$ mod $q$.

\vspace{1ex}
\noindent
{\bf Case (i)}: We have $N_{pq}(F)=1$, $N_{p}(F)=p$, $N_q(F)=1$.

 If $N_{pq}(F)=1$ then  $F(\omega_{pq})$ is a unit and by the lemma $\omega_{pq}^d F(\omega_{pq}^{-1})= \ve \omega_{pq}^J F(\omega_{pq})$ with $\ve=1$ or $-1$ and some  $J\geq 0.$ Hence we have a polynomial expression
$$
x^dF(x^{-1}) = \ve x^J F(x)+ \Phi_{pq}(x)h(x), $$
for some  $h(x)$ in $\mathbb Z[x].$

Suppose that $N_p(F)=p^{\delta}$ then $F(\omega_p)=(1-\omega_p)^{\delta}v(\omega_p)$, where $v(\omega_p)$ is a unit
in $\mathbb Z[\omega_p]$ and so $\omega_p^dF(\omega_p^{-1})=(-1)^{\delta} (1-\omega_p)^{\delta} \omega_p^{J'} v(\omega_p)=(-1)^\delta \omega_p^{J'}F(\omega_p)$.

Observing that $\Phi_{pq}(\omega_p)=q\Phi_p(\omega_p)^{-1}$ we get
$$ (-1)^{p\delta} F(\omega_p)^p \equiv \ve^p F(\omega_p)^p \text{ mod }q. $$
Since $|F(\omega_p)|_q=1$ we deduce that $(-1)^{\delta}=\ve$. Reversing the primes we see that $N_{pq}(F)=1$, $N_p(F)=p^{\delta}$, $N_q(F)=q^{\delta'}$ forces $(-1)^{\delta}=(-1)^{\delta'}=\ve$ ruling out $\delta,\delta'=1,0$ or $0,1$.
 
\vspace{1ex}
\noindent
{\bf Case (ii).} We have $N_{pq^{\beta}}(F)=p$ some $\beta\geq 1$ and $N_p(F)=1$.

From  $N_{pq^{\beta}}(F)=p$ we can write
$$ V:=\text{Norm}_{\mathbb Q(\omega_{pq^{\beta}})/\mathbb Q(\omega_p)} F(\omega_{pq^{\beta}})=\prod_{j\in \mathscr{J}} F(\omega_{pq^{\beta}}^j), \;\;\; \text{Norm}_{\mathbb Q(\omega_p)/\mathbb Q }(V) =p, $$
where we can take $\mathscr{J}$ to be the $\phi(q^{\beta})$ values of  $j$ mod $pq^{\beta}$ with $j\equiv 1$ mod $p$, $q\nmid j$. 
Hence $V=(1-\omega_p)v(\omega_p)$  where $v(\omega_p)$ is a unit in $\mathbb Z[\omega_p],$ and by Lemma \ref{Kron}
$$\prod_{j\in \mathscr{J}} F(\omega_{pq^{\beta}}^{-j})=(1-\omega_p^{-1})v(\omega_p^{-1})=-\omega_p^J V$$
for some $J,$ giving us the polynomial relationship
\be \label{Npq}  x^{pd\sum_{j\in\mathscr{J}}j}\prod_{j\in \mathscr{J}} F(x^j)^p+  \prod_{j\in \mathscr{J}}x^{pjd} F(x^{-j})^p=\Phi_{pq^{\beta}}(x)h(x), \ee
for some $h(x)$ in $\mathbb Z[x]$.
Now if $N_p(F)=1$ then  $\prod_{j\in \mathscr{J}} F(\omega_p^j)$ is a unit in $\mathbb Z [\omega_p]$ and Lemma \ref{Kron} gives    $\prod_{j\in \mathscr{J}} F(\omega_p^{-j})=\omega_p^{J'} \prod_{j\in \mathscr{J}} F(\omega_p^{j})$ for some $J'$. But then from  \eqref{Npq} 
$$\Phi_{p q^{\beta}}(x)=\Phi_{q^{\beta}}(x^{p})\Phi_{q^{\beta}}(x)^{-1} \;\; \Rightarrow \;\;   2\prod_{j\in \mathscr{J}} F(\omega_p^j)^p\equiv \; 0 \; (\text{mod }q), $$
a contradiction as the unit  $\left| \prod_{j\in \mathscr{J} } F(\omega_p^j)^p\right|_q=1$. \end{proof}

For small $n=pq$ using SAGE to work out an explicit set of $k= \frac{1}{2}\phi(n)-1$ generating units $u_1(\omega_{n}),\ldots ,u_{k}(\omega_n),$  in $\mathbb Z [\omega_{n}],$ it was noticeable that we could take $u_1(x)=x-1$ and 
all the others reciprocal  (making Lemma \ref{Kron}  self evident in those cases):

\vspace{2ex}
\begin{center}
\begin{tabular}{|c|l|}
\hline
$n$ & $u_2(x),\ldots ,u_k(x)$  \\
\hline
15 & $x+1,x^3+1$ \\
21 &  $x+1,x^2+1,x^3+1,x^6+x^3+1$   \\
33 & $x+1,x^2+1, x^3+1, x^4+1, x^6+1, x^{18}+1, x^6+x^3+1, \Phi_5(x)$ \\
35 &   $x+1,x^2+1,x^3+1,x^4+1,x^5+1,x^7+1,x^{15}+1,x^2+x+1,x^6+x^3+1,\Phi_{11}(x)$ \\
39 &   $x+1,x^2+1,x^3+1,x^5+1,x^6+1,x^{18}+1,x^6+x^3+1,\Phi_5(x),\Phi_7(x), \Phi_{11}(x)$ \\
\hline
\end{tabular}
\end{center}

\vspace{2ex}
\noindent
For 33 and 39  we have replaced the non-reciprocal unit given by SAGE by a reciprocal unit;
$(x^{11}-x^4+1)-(1+x^{11}+x^{22})=-x^4(x^{18}+1)$, $(x^{15}+x^2+1)+(1+x^{13}+x^{26})(x^{13}-x^2-1)=x^{28}(x^{11}-1)$ respectively, and for $33$ observed that $(x^{19}-x^{18}-x^{17}+x^{16}-x^{14}+x^{13}-x^{11}+x^{10}-x^6+x^4-x^3+x-1)+\Phi_{33}(x)-x^7(x^2-1)(x^{22}+x^{11}+1)=-x^{29}(x^2-1)$. For small $n=p$
we could make them all reciprocal:
\vspace{2ex}
\begin{center}
\begin{tabular}{|c|l|}
\hline
$n$ & $u_2(x),\ldots ,u_k(x)$  \\
\hline
5 & $x+1$ \\
7 & $x+1,x^3+1$\\
11 & $x+1,x^2+1,x^5+1, x^2+x+1$\\
13 & $x+1,x^2+1, x^6+1, x^2+x+1,x^{10}+x^5+1$ \\
\hline
\end{tabular}
\end{center}

\noindent
It is tempting to ask:

\vspace{1ex}
\noindent
{\bf Question 2.} Is there always a set of unit generators for $\mathbb Z[\omega_n]$ with at most one skew-reciprocal and the rest reciprocal?

\vspace{1ex}
In the proof of Theorem \ref{generalisation} for $n=pq$ we needed to separately consider the possibility of $N_{pq}(F)=p$
with $N_{p}(F)=1$.
It seems natural to ask:

\vspace{1ex}
\noindent
{\bf Question 3.} For which $p\equiv 1$ mod $q$ is there an $F$ in $\mathbb Z[x]$ with $N_{pq}(F)=p?$

\vspace{1ex}
\noindent
Some cases can be immediately dismissed:
\begin{lemma} \label{Nop}  If $pq\equiv 3$ mod 4, then there is no $F$ in $\mathbb Z[x]$ with $N_{pq}(F)=p$.

\end{lemma}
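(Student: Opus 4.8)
The plan is to use the unique quadratic subfield of $\mathbb Q(\omega_{pq})$ that is \emph{imaginary} precisely because $pq\equiv 3\bmod 4$. Writing $p^*=(-1)^{(p-1)/2}p$ and $q^*=(-1)^{(q-1)/2}q$, note that $\mathbb Q(\sqrt{p^*q^*})\subseteq\mathbb Q(\omega_p)\mathbb Q(\omega_q)=\mathbb Q(\omega_{pq})=:L$ always, and that $pq\equiv 3\bmod 4$ means exactly one of $p,q$ is $\equiv 3\bmod 4$, so $p^*q^*=-pq$. Hence $K:=\mathbb Q(\sqrt{-pq})$, an imaginary quadratic field, sits inside $L$. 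Suppose for contradiction that $F\in\mathbb Z[x]$ has $N_{pq}(F)=p$, let $\mathcal O_K$ be the ring of integers of $K$, and set $W:=\Norm_{L/K}\bigl(F(\omega_{pq})\bigr)\in\mathcal O_K$. By the tower law for norms, $\Norm_{K/\mathbb Q}(W)=\Norm_{L/\mathbb Q}(F(\omega_{pq}))=N_{pq}(F)=p$, and in particular $W\neq 0$.

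Next I would run the ideal-theoretic step. The prime $p$ divides the discriminant of $K$ (it divides $pq$), so $p$ ramifies in $K$, say $(p)=\mathfrak p^{2}$ with $\Norm\mathfrak p=p$; consequently $\mathfrak p$ is the \emph{only} ideal of $\mathcal O_K$ of norm $p$, and $(W)=\mathfrak p$. Since $\mathfrak p$ is the unique prime of $K$ above $p$, it is fixed by complex conjugation, so $(\overline W)=\overline{(W)}=(W)$, i.e. $\overline W=uW$ for a unit $u\in\mathcal O_K^{\times}$. As $K$ is imaginary quadratic with $pq\geq 15>3$, we have $\mathcal O_K^{\times}=\{\pm1\}$, hence $\overline W=\pm W$. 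Then $p=W\overline W=\pm W^{2}$, so $W^{2}=\ve p$ for some $\ve\in\{\pm1\}$; since $\ve p$ is not a square in $\mathbb Q^{\times}$, $W$ generates the quadratic field $\mathbb Q(\sqrt{\ve p})\subseteq K$, forcing $\mathbb Q(\sqrt{\ve p})=\mathbb Q(\sqrt{-pq})$. Equivalently $(\ve p)(-pq)=-\ve p^{2}q$ is a square in $\mathbb Q^{\times}$, i.e. $-\ve q\in\{q,-q\}$ is a square in $\mathbb Q^{\times}$ — impossible for the odd prime $q$. This contradiction proves the lemma.

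The main point requiring care — and the place where the hypothesis $pq\equiv 3\bmod 4$ is genuinely used — is the identification $(W)=\mathfrak p=\overline{(W)}$: it relies on $p$ being \emph{ramified} rather than split in the relevant quadratic subfield, which in turn needs that subfield to be $\mathbb Q(\sqrt{-pq})$ (imaginary). If $pq\equiv 1\bmod 4$ the corresponding subfield of $L$ is the real field $\mathbb Q(\sqrt{pq})$, whose unit group is infinite and whose primes above $p$ can be non-conjugate, so the argument collapses — consistent with the lemma being stated only for $pq\equiv 3\bmod 4$. Everything else (the sign computation $p^*q^*=-pq$, ramification of $p$, triviality of $\mathcal O_K^{\times}$, the tower law) is routine algebraic number theory and needs no estimates. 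A variant avoiding ideals altogether would descend through $\Norm_{L/\mathbb Q(\omega_p)}$ as in the proof of Theorem \ref{generalisation}, but routing through the imaginary quadratic subfield is the cleanest.
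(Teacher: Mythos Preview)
Your proof is correct and follows the same key idea as the paper: descend via the norm to the imaginary quadratic subfield $K=\mathbb Q(\sqrt{-pq})\subset\mathbb Q(\omega_{pq})$ (available precisely when $pq\equiv 3\bmod 4$) and show that $p$ cannot be $\Norm_{K/\mathbb Q}(W)$ for any algebraic integer $W\in\mathcal O_K$. The only difference is in the final step: where you invoke ramification, the unit group $\mathcal O_K^{\times}=\{\pm1\}$, and conclude $W^2=\pm p$ to force an impossible equality of quadratic fields, the paper simply observes that $\Norm_{K/\mathbb Q}(W)=p$ would mean $x^2+pq\,y^2=4p$ has an integer solution, which it visibly does not (if $y=0$ then $4p$ is a square; if $|y|\ge1$ then $x^2=p(4-qy^2)\le p(4-q)<0$ unless $q=3$, $y=\pm1$, in which case $x^2=p$). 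Your argument is more structural and would adapt to settings where the Diophantine inequality is less transparent; the paper's one-line finish is more elementary here.
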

\begin{proof} Set $L=\mathbb Q(\sqrt{-pq})\subset \mathbb Q(\omega_{pq})$. If $p=N_{pq}(F)$ then $p=\text{Norm}_{L/\mathbb Q}( W),$  $W=\text{Norm}_{\mathbb Q(\omega_{pq})/L}(F(\omega_{pq}))$. But $p$ is not the norm of an algebraic integer in $L,$ since $x^2+pq y^2=4p$ plainly has no integer solution. \end{proof}

\noindent
If $p,q\equiv 3$ mod 4, $p\equiv 1$ mod $q$ then such an $F$ can exist; for example in $\mathbb Z[\omega_{21}]$ all ideals are principal and $N_{21}(x^4+x-1)=7$, though this is the only example that we found. For $p=19,31,43,67$ or $79$  there is no $F$ in $\mathbb Z[x]$ with $N_{3p}(F)=p$. For $p=19$ or 31 one can check in Magma  that the ideals $<p,x-\alpha_i>,$ where
$\Phi_{pq}(x)\equiv \prod_{i=1}^{q-1}(x-\alpha_i)^{(p-1)}$ (mod $p$), are non-principal,  though there is an $F$ in $\mathbb Q[x]$ for both. For the remaining primes one can check that there is no algebraic integer of norm $p$ in the 
degree $2(p-1)/6$ subfield $\mathbb Q \left( \sqrt{3}i \sum_{j=1}^6 \omega_p^{r^{(p-1)j/6}}\right)$, $r$ a primitive root mod $p$, though
again there are elements of norm $p$ in the field. It is easier to check that there are no integer solutions among the 19 values $pq<5000$ with $q>3$; using  the degree $(q-1)$ subfield $\mathbb Q\left( 2\cos(2\pi/q)\sqrt{p}i\right)$ for $q=7,11,19$ or the degree four field $\mathbb Q\left(\sqrt{p}i,\sqrt{q}i\right)$ for $q=23$.

\vspace{1ex}
There are no cases of $N_{n}(F)=p$ with $n=pq<10000$ and $p\equiv 1$ (mod 4), $q\equiv 1$ (mod $4q$). Of the 44 possibilities, 37  could be ruled
out just by checking that $p$ was not the norm of an algebraic integer in the  quadratic field $\mathbb Q(\sqrt{n})$
(note this will not rule out cases such as  $p=4q+1$, $q=13,37,53,73,\ldots $, where $p^2-n\cdot 2^2=p$), 
the other 7 using a degree 8 field and Magma; $\mathbb Q(\omega_5\sqrt{p}),$ when $q=5$ and $p=181,$ 761,1021,1621,1741, 
and  $\mathbb Q \left( \left(\sum_{j=1}^{(q-1)/4} \omega_q^{2^{4j}} \right)\sqrt{p}\right)$ when $(q,p)=(13,53),(37,149)$.

%$n=13\cdot 53$, $37\cdot 149.$ Note if $q$, $p=4q+1$ eg $q=13,37,53,73,\ldots $ there is the obvious solution $(x/2,y/2)=(p,2)$. 

Rachel Newton has pointed out to us that \cite{Wei}  could probably  be used to decide when there are solutions to $N_{pq}(F)=p$ with $F$ in $\mathbb Q[x]$. See also \cite{Simon}.

\section{Constructing the multiples of $p^3$ and $3^3$ in $S_{3p}$} \label{p^3}

We can get $p^3m$ for any $3\nmid m$ as a $\mathbb Z_{3p}$ determinant from:

\begin{lemma} \label{ppower}
Suppose that $2mp\equiv k$ mod 3 with $k=1$ or $2$, then
$$ F(x)=\left(\frac{x^{mp+2k}-1}{x-1}\right) -x^{2mp-k}\left(\frac{x^k-1}{x-1} \right)(x^{3k}+1) \hspace{3ex}  \Rightarrow  \hspace{3ex} M_{3p}(F)=p^3m. $$

\end{lemma}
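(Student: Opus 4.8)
The plan is to compute $M_{3p}(F)$ through the factorisation \eqref{defM_n} as $N_1(F)\,N_3(F)\,N_p(F)\,N_{3p}(F)$ and to verify that the four factors are $mp$, $1$, $p^2$ and $1$.

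The first three are direct evaluations. At $x=1$ the bracketed factors of $F$ equal $mp+2k$, $k$ and $2$, so $N_1(F)=F(1)=(mp+2k)-k\cdot 2=mp$. At a primitive cube root $\omega_3$ the hypothesis $2mp\equiv k$ (mod $3$) — equivalently $mp\equiv 2k$ (mod $3$) — gives $\omega_3^{2mp-k}=1$, $\omega_3^{3k}+1=2$ and $\omega_3^{mp+2k}=\omega_3^{k}$, so $F(\omega_3)=-(\omega_3^{k}-1)/(\omega_3-1)$; since $\Norm_{\mathbb Q(\omega_3)/\mathbb Q}(\omega_3^{k}-1)=\Norm(\omega_3-1)=\Phi_3(1)=3$, this gives $N_3(F)=\Norm(F(\omega_3))=1$. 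At a primitive $p$th root $\omega_p$ we have $\omega_p^{mp}=1$, so $\omega_p^{mp+2k}=\omega_p^{2k}$ and $\omega_p^{2mp-k}=\omega_p^{-k}$; clearing the denominator $x-1$ and multiplying by $\omega_p^{k}$ one finds
\be \label{plan:Np} \omega_p^{k}F(\omega_p)=-(1-\omega_p)(1-\omega_p^{2})\quad(k=1),\qquad \omega_p^{k}F(\omega_p)=-(1+\omega_p)(1-\omega_p^{2})(1-\omega_p^{4})\quad(k=2). \ee
Multiplying over the $p-1$ primitive $p$th roots, each factor $(1-\omega_p^{cj})$ with $\gcd(c,p)=1$ contributes $\prod_{j=1}^{p-1}(1-\omega_p^{j})=\Phi_p(1)=p$, a factor $(1+\omega_p^{j})$ contributes $\Phi_p(-1)=1$, and $\prod_{j}(-\omega_p^{-kj})=1$, so $N_p(F)=p^2$ in both cases.

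The substance is $N_{3p}(F)$. Take the primitive $3p$th root $\zeta=\omega_3\omega_p$, so that $\zeta^{mp}=\omega_3^{mp}=\omega_3^{2k}$ by $mp\equiv 2k$ (mod $3$). Expanding $N(x):=(x-1)F(x)=x^{mp+2k}-1-x^{2mp+3k}-x^{2mp}+x^{2mp+2k}+x^{2mp-k}$ and putting $x=\zeta$, all the exponents collapse to powers of $\omega_3$ times powers of $\omega_p$:
\be \label{plan:Nzeta} N(\zeta)=\omega_3^{k}\omega_p^{2k}-1-\omega_3^{k}\omega_p^{3k}-\omega_3^{k}+\omega_p^{2k}+\omega_p^{-k}. \ee
Multiplying \eqref{plan:Nzeta} by $\omega_p^{k}$ and applying $1+\omega_3^{k}+\omega_3^{2k}=0$ factors the right-hand side as
\be \label{plan:fact} \omega_p^{k}N(\zeta)=\bigl(1+\omega_3^{-k}\omega_p^{k}\bigr)\bigl(1-\omega_3^{-k}\omega_p^{3k}\bigr). \ee
Hence $F(\zeta)=N(\zeta)/(\zeta-1)$ equals $\omega_p^{-k}$ times the product in \eqref{plan:fact} divided by $\zeta-1$. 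Now $\omega_3^{-k}\omega_p^{3k}$ is a primitive $3p$th root of unity, so $\Norm(1-\omega_3^{-k}\omega_p^{3k})=\Phi_{3p}(1)=1$; the element $-\omega_3^{-k}\omega_p^{k}$ is a primitive $6p$th root (as $3p$ is odd and $\mathbb Q(\omega_{3p})=\mathbb Q(\omega_{6p})$), so $\Norm(1+\omega_3^{-k}\omega_p^{k})=\Phi_{6p}(1)=1$; and $\Norm(\zeta-1)=\Phi_{3p}(1)=1$, while $\Norm(\omega_p^{-k})=1$. All four norms equal $+1$, so $N_{3p}(F)=\Norm(F(\zeta))=1$. (One could instead fix the sign from $N_{3p}(F)\equiv N_3(F)^{p-1}\equiv 1$ (mod $p$) and $N_{3p}(F)\equiv N_p(F)^{2}\equiv 1$ (mod $3$), using $\Phi_{3p}\equiv\Phi_3^{\,p-1}$ (mod $p$) and $\Phi_{3p}\equiv\Phi_p^{\,2}$ (mod $3$).)

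Putting this together, $M_{3p}(F)=mp\cdot 1\cdot p^{2}\cdot 1=p^{3}m$; for $m<0$ one applies the result to $-F$ with the complementary value of $k$, since $M_{3p}(-F)=-M_{3p}(F)$. The one genuinely delicate step — and the point at which the specific exponents $mp+2k$, $2mp-k$, $3k$ in $F$ are really being used — is the passage from \eqref{plan:Nzeta} to the factorisation \eqref{plan:fact}: it works because, after reduction modulo $3$, every cross term organises around the single relation $1+\omega_3^{k}+\omega_3^{2k}=0$, after which one must keep careful track of whether the roots of unity that appear are primitive of order $3p$ or of order $6p$, so that the relevant norms come out as the values $\Phi_{3p}(1)=\Phi_{6p}(1)=1$.
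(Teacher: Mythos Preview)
Your proof is correct and follows essentially the same route as the paper: compute $M_{3p}(F)=N_1(F)N_3(F)N_p(F)N_{3p}(F)$, evaluate the first three directly, and for $N_{3p}$ expand $(x-1)F(x)$, use the relation $1+\omega_3^{k}+\omega_3^{2k}=0$ to collapse the six monomials into a product of two factors each with trivial norm. The paper keeps $x$ as a generic primitive $3p$th root and writes the factorisation as $(x-1)F(x)=x^{2k}(x^{2mp-3k}-1)(x^{2mp+k}+1)$, whereas you specialise to $\zeta=\omega_3\omega_p$ and obtain $\omega_p^{k}N(\zeta)=(1+\omega_3^{-k}\omega_p^{k})(1-\omega_3^{-k}\omega_p^{3k})$; since $\zeta^{2mp+k}=\omega_3^{-k}\omega_p^{k}$ and $\zeta^{2mp-3k}=\omega_3^{k}\omega_p^{-3k}$, these are the same factorisation in different notation. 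Your closing remark about $m<0$ is unnecessary (the computation goes through verbatim for any $m$ with $3\nmid m$, one merely multiplies $F$ by a power of $x$ to clear negative exponents), but it does no harm.
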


We can get any $3^3m$, $p\nmid m$ as a $\mathbb Z_{3p}$ determinant from:

\begin{lemma} \label{3power}
If $p\nmid m$ then
$$ F_3(x)=\left(\frac{x^{3p-9}-1}{x-1}\right) -x^{3p-6}(1+x^3+x^6)\left(\frac{x^{p-3-3m}-1}{x-1}\right) \;\;\; \Rightarrow \;\;\; M_{3p}(F_3)=3^4m, $$
multiplying by a power of $x$ to make a polynomial if $3m>p-3$.

For $\gcd(m,3p)=1$ we trivially  have 
$$ F_4(x)=(1+x^3+x^6) \prod_{q^{\alpha}\parallel m} \Phi_q^{\alpha}(x) \hspace{3ex} \Rightarrow \hspace{3ex} M_{3p}(F_4)=3^3m. $$
\end{lemma}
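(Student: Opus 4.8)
The plan is to compute $M_{3p}(F_3)=\prod_{d\mid 3p}N_d(F_3)$ and $M_{3p}(F_4)=\prod_{d\mid 3p}N_d(F_4)$ one cyclotomic factor at a time, working throughout under the standing hypotheses $p\geq 5$ (so that $p\neq 3$ and, crucially, $3p$ is not a prime power) and $p\nmid m$.

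For $F_3$ the decisive opening move is a clearing-of-denominators identity. Writing $1+x^3+x^6=\Phi_9(x)$, multiplying $F_3$ by $x-1$, expanding, and reducing each exponent modulo $3p$ (legitimate because $M_{3p}$ sees $F_3$ only through its values at $3p$th roots of unity, and the power of $x$ inserted to polynomialise when $3m>p-3$ changes $M_{3p}$ by the unit $M_{3p}(x)=1$), one should land on
$$ (\zeta-1)\,F_3(\zeta)=\zeta^{-9}\bigl(1-\zeta^{\,p-3m}\bigr)\bigl(1+\zeta^3+\zeta^6\bigr)$$
for every $3p$th root of unity $\zeta\neq 1$. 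From this, together with the direct evaluation $F_3(1)=(3p-9)-3(p-3-3m)=9m$, the two small factors drop out at once: $N_1(F_3)=9m$, and at $\zeta=\omega_3$ (where $1+\omega_3^3+\omega_3^6=3$) a short split on $p\bmod 3$ gives $|F_3(\omega_3)|=3$ in both cases, hence $N_3(F_3)=|F_3(\omega_3)|^2=9$.

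The remaining factors $N_p(F_3),N_{3p}(F_3)$ I would show equal $\pm1$ by proving $F_3(\omega_p)$ and $F_3(\omega_{3p})$ are units (each $N$ being then a field norm of a unit). In $\mathbb{Z}[\omega_p]$: because $p\nmid 3m$ (this is where $p\nmid m$ is used), $1-\omega_p^{p-3m}=1-\omega_p^{-3m}$ is an associate of $1-\omega_p$; because $p\nmid 3,9$, both $\omega_p^{-9}$ and $1+\omega_p^3+\omega_p^6=(\omega_p^9-1)/(\omega_p^3-1)$ are units; so the right side of the identity equals a unit times $1-\omega_p$, matching the $\zeta-1$ on the left, and $F_3(\omega_p)$ is a unit. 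In $\mathbb{Z}[\omega_{3p}]$ it is even cleaner: $1-\omega_{3p}$ is \emph{itself} a unit since $3p$ is not a prime power; $\gcd(p-3m,3p)=1$ (as $p\not\equiv0\bmod3$ and $p\nmid m$) makes $1-\omega_{3p}^{p-3m}$ a unit; and $1+\omega_{3p}^3+\omega_{3p}^6=(\omega_{3p}^9-1)/(\omega_{3p}^3-1)$ is a unit because $\omega_{3p}^3,\omega_{3p}^9$ are both primitive $p$th roots of unity. Hence $|M_{3p}(F_3)|=|9m|\cdot9=81|m|$, and since $3p$ is odd the sign of $M_{3p}(F_3)$ matches that of $F_3(1)=9m$; therefore $M_{3p}(F_3)=3^4m$.

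The claim for $F_4=\Phi_9(x)\prod_{q^{\alpha}\parallel m}\Phi_q(x)^{\alpha}$ is quick: multiplicativity of $M_{3p}$ and \eqref{basic} reduce it to $M_{3p}(\Phi_9)=3^3$, and the same bookkeeping gives this---$N_1(\Phi_9)=3$, $N_3(\Phi_9)=\Phi_9(\omega_3)\Phi_9(\omega_3^2)=9$ (equivalently $|\Res(\Phi_3,\Phi_9)|=3^{\phi(3)}$ since $9=3\cdot3$), and $\Phi_9(\omega_p),\Phi_9(\omega_{3p})$ are units exactly as above, so $N_p(\Phi_9)=N_{3p}(\Phi_9)=\pm1$ and $M_{3p}(\Phi_9)=3^3$, with sign $+$ from $\Phi_9(1)>0$. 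Finally, writing $m=3^am'$ with $\gcd(m',3)=1$, $F_4$ handles $3^3m'$ while $F_3$ with parameter $3^{a-1}m'$ handles $3^{3+a}m'$ for $a\geq1$, so between them every $3^3m$ with $p\nmid m$ is realised. The one real obstacle is guessing and verifying the collapsing identity for $(\zeta-1)F_3(\zeta)$; everything after is routine unit bookkeeping, whose only load-bearing point is that $1-\omega_{3p}$ is a unit (as $3p$ is not a prime power) whereas $1-\omega_p$ is not.
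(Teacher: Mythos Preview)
Your proposal is correct and follows essentially the same route as the paper: derive the identity $(x-1)F_3(x)=-x^{-9}(1+x^3+x^6)(x^{p-3m}-1)$ at the $3p$th roots of unity $x\neq 1$, then read off $N_1,N_3,N_p,N_{3p}$; the paper phrases the $N_p,N_{3p}=1$ step via cyclotomic resultants (none of $\Phi_9$ or the $\Phi_d$ with $d\mid|p-3m|$ has order differing from $p$ or $3p$ by a prime power) whereas you phrase it as a unit computation in $\mathbb{Z}[\omega_p]$ and $\mathbb{Z}[\omega_{3p}]$, but these are the same fact. Your separate sign handling is in fact unnecessary---since $3,p,3p>2$ each $N_d$ for $d>1$ is a product of conjugate pairs $|F_3(\zeta)|^2\geq 0$, so $N_p=N_{3p}=+1$ automatically---but it is not wrong.
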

\begin{proof}[Proof of Lemma \ref{ppower}]
 The value at $x=1$ gives $N_1(F)=(mp+2k)-2k=mp$. 
When $x$ is a primitive cube root of unity we get $F(x)=\left(\frac{x^k-1}{x-1}\right) (1-2)$ and $N_3(F)=1$.

When $x$ is a primitive $p$th root of unity we get
$$ F(x)=\left(\frac{x^{2k}-1}{x-1}\right) \left( 1 - x^{-k}\left( x^{2k}-x^k+1\right)\right)=-x^{-k}(x^{2k}-1)\frac{(x^k-1)^2}{(x-1)}$$ 
and $N_p(F)=p^2$. 

When $x$ is a primitive $3p$th root, we have $x^{2m'p}+x^{m'p}+1=0$ for $3\nmid m'$, and  with $m'=m,2m$, 
\begin{align*}  (x-1)F(x) & = x^{mp+2k}-1-x^{2mp+3k}-x^{2mp}+x^{2mp-k}+x^{2mp+2k} \\
 & = x^{4mp} -x^{2k} -x^{2mp+3k} +x^{2mp-k} =x^{2k} (x^{2mp-3k}-1)(x^{2mp+k}+1)
\end{align*}
giving a contributiom $N_{3p}(F)=1$ (since the resultant of $\Phi_{3p}$ and $\Phi_d$ will be trivial unless $d$ is a prime power multiple of $3p$ or $3$ or $p$).

\end{proof}
 
\begin{proof}[Proof of Lemma \ref{3power}] Plainly $x=1$ contributes $N_1(F_3)=(3p-9)-3(p-3-3m)=9m$. For $x$ a $3p$th root of unity $x\neq 1$
we have:
\begin{align*} x^9 F_3(x) & =-\frac{1}{x-1} \left( x^9-1 + x^3(1+x^3+x^6)(x^{p-3-3m}-1)\right)\\
 & = -x^{-9}(1+x^3+x^6)\frac{(x^{p-3m}-1)}{(x-1)}. 
\end{align*}
For the primitive cube-roots this gives $-3\Phi_p(x)$ and $N_3(F_3)=9$. For the  primitive $p$th and $3p$th  roots we get 
plus or minus a power of $x$, $\Phi_9(x)$ and $\Phi_d(x)$ with $d>1$ dividing $\abs{p-3m}.$ None of these 
cyclotomics having order differing from $p$ or $3p$
by a prime power, so contribute $N_p(F_3)=N_{3p}(F_3)=1$.

For $F_4$, plainly $N_{3p}(1+x^3+x^6)=27,$ with the remaining factor contributing the $m$ as usual when $\gcd(|G|,m)=1$.

\end{proof}

For $n=35$ or $55$ we can get $p^3$ from
$$ M_{35}(1+x^3+x^5+x^7+x^{10})=M_{55}(1+x^3+x^5+x^7+x^{10})=5^3 $$
and
$$ M_{35}\left(  \left(\frac{x^{9}-1}{x-1}\right) -x^{3} (x^2+1) \right)=7^3,\;\; \;M_{55}\left(  x^{14}+1 +x^3\left(\frac{x^9-1}{x-1}\right)  \right)=11^3. $$
%M_{35}\left(  \left(\frac{x^{43}-1}{x-1}\right) -x^{10} (x^5+1)\left(\frac{x^{18}-1}{x-1}\right)  \right)=7^3,\
For $\gcd(k,n)=1$ and $F(1)\neq 0$ we have 
$$ G(x)=  \left(\frac{x^k-1}{x-1}\right)F(x)+\lambda \left(\frac{x^n-1}{x-1}\right) \Rightarrow M_n(G)= \left(\frac{kF(1)+\lambda n}{F(1)} \right) M_n(F), $$
and, since $F(1)=p$, taking $k=mp^t-q$, $\lambda=1$, in the above examples gives $M_n(G)=mp^{t+3}$  for any $t\geq 1$ 
and $q\nmid m$, with the $mp^3$ following from $k=m$, $\lambda=0$ or \eqref{basic} and closure under multiplication.

\section{Good or Bad 15-norms}

We begin by showing that elements in $\mathbb Z[\omega_{15}]$ can be written in one of two ways.

\begin{lemma} \label{goodbad} 
Suppose that $\xi$ is in $\mathbb Z[\omega_{15}]$ with $\gcd(N_{15}(\xi),15)=1$, then 
$$ \xi=u_1 F_1(\omega_{15})=u_2F_2(\omega_{15}) $$
where $u_1$,$u_2$ are units in $\mathbb Z[\omega_{15}]$ and
\begin{align*}  F_1(x) & =(x^5-1)\pm x^j \Phi_3(x) B(x) +(x-1)\Phi_3(x)\Phi_5(x)g_1(x),\\
   F_2(x) & =(x^3-1)\pm x^{j'}\Phi_5(x)B(x) +(x-1)\Phi_3(x)\Phi_5(x)g_2(x), \end{align*}
for some $g_1(x),g_2(x)$ in $\mathbb Z[x]$, integers $0\leq j,j'<15$, and either $B(x)=1$ or $(x-1)$.
\end{lemma}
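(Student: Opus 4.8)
The plan is to understand the structure of the ideal $(\omega_{15}-1)$ and the units in $\mathbb{Z}[\omega_{15}]$, and then show that modulo units every relevant $\xi$ can be massaged into one of the two claimed normal forms. Write $\pi_3 = \Phi_3(\omega_{15})$, $\pi_5 = \Phi_5(\omega_{15})$ and $\pi_{15} = \omega_{15}-1$; the key arithmetic facts are that $N_{15}$ of the cyclotomic polynomials behaves as recalled in the introduction, so $3$ and $5$ each ramify in $\mathbb{Z}[\omega_{15}]$ with prime factors lying above $\Phi_3$ and $\Phi_5$ respectively, while $\omega_{15}-1$ is a unit. The hypothesis $\gcd(N_{15}(\xi),15)=1$ says exactly that $\xi$ is coprime to both ramified primes, i.e. $\xi$ is a unit modulo the ideal $\mathfrak a := (\Phi_3(\omega_{15}))(\Phi_5(\omega_{15}))$ — more precisely modulo $(x-1)\Phi_3(x)\Phi_5(x)$ evaluated at $\omega_{15}$, which is the object appearing in the $g_i$ terms.

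First I would set up the reduction: I want to find a polynomial $F_1(x)$, of the shape $(x^5-1) \pm x^j\Phi_3(x)B(x)$ modulo $(x-1)\Phi_3(x)\Phi_5(x)$, and a unit $u_1$ with $\xi = u_1 F_1(\omega_{15})$. Note $(x^5-1)=\Phi_1(x)\Phi_5(x)$ and $(x^3-1)=\Phi_1(x)\Phi_3(x)$, so $F_1(\omega_{15})$ is visibly divisible by $\pi_5$ (once) but, because of the $\pm x^j\Phi_3(x)B(x)$ correction, the full element is a unit times $\pi_5$ (when $B=1$) or a unit times $\pi_5\pi_{15}$... wait, one must be careful: actually the point is that $F_1(\omega_{15})$ should represent $\xi$ up to a unit, and $\xi$ itself is a unit here. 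So the real content is a Chinese-Remainder-type statement: given that $\xi$ is a unit, I can choose a unit $u_1$ so that $u_1^{-1}\xi$ agrees with $(x^5-1)$ modulo the prime above $\Phi_5$ up to prescribed low order, and the freedom in $g_1$ lets me adjust modulo $(x-1)\Phi_3(x)\Phi_5(x)$. The two-valued parameter $B(x)\in\{1,x-1\}$ and the exponent $j$ are the extra degrees of freedom needed to hit the residue of $\xi$ modulo the prime over $\Phi_3$: concretely, $x^5-1 \equiv (\omega_{15}^5-1)=\omega_3-1 = \pi_{15}\cdot(\text{unit})$ at a prime over $\Phi_3$, so I need a correction term supported there, and the subgroup of $(\mathbb{Z}[\omega_{15}]/\Phi_3)^\times$ generated by $\omega_{15}$ together with the choice of $B$ must cover all residue classes — that is the crux computation.

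The concrete steps I would carry out: (1) localize at the prime $\mathfrak p_3$ above $3$ and at $\mathfrak p_5$ above $5$, record the residue fields ($\mathbb{F}_9$ and $\mathbb{F}_{25}$, since $3$ has order $2$ mod $5$ and $5$ has order $2$ mod $3$) and the ramification indices ($\phi(3)=2$ and $\phi(5)=4$ respectively); (2) check that $(x^5-1)$ and $(x^3-1)$, evaluated at $\omega_{15}$, are uniformizers (up to units) at $\mathfrak p_5$ and $\mathfrak p_3$ respectively and units at the other ramified prime; (3) show that by multiplying $\xi$ by a suitable unit we can reduce to the case where $\xi/\pi_5$ (say) has any prescribed image in $(\mathbb{Z}[\omega_{15}]/\mathfrak a)^\times$, using that the unit group surjects onto the relevant quotient — this is where I expect to either invoke the explicit generators $x-1,x+1,x^3+1$ from the table, or a counting/class-number-one argument via Washington's Theorem 11.1; (4) finally, verify that the residues achievable by $(x^5-1)\pm x^j\Phi_3(x)$ and $(x^5-1)\pm x^j\Phi_3(x)(x-1)$, as $j$ ranges over $0,\dots,14$ and the sign varies, together with the freedom of adding $(x-1)\Phi_3(x)\Phi_5(x)g_1(x)$, exhaust everything; then do the symmetric argument for $F_2$. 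The main obstacle will be step (4): proving that the finitely many normal-form representatives actually surject onto the residue class group modulo the relevant prime power — this is a finite but delicate check about which classes $\omega_{15}^j$ times $\{1, \omega_{15}-1\}$ realize in $(\mathbb{Z}[\omega_{15}]/\mathfrak p_3^{e})^\times$, and getting the exact power of $\pi_{15}$ that distinguishes ``good'' ($B=1$) from ``bad'' ($B=x-1$) right is the heart of the whole paper. I would also need to be careful that the unit $u_1$ chosen to fix the $\mathfrak p_5$-part does not disturb the $\mathfrak p_3$-part beyond what $j$ and $B$ can repair, which may force an interleaving of the CRT choices rather than treating the two primes independently.
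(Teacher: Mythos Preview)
Your plan is a genuinely different route from the paper's, but in its present form it contains errors that would derail the argument, and the decisive step is only gestured at.

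First, a few concrete slips. You write ``$\xi$ itself is a unit here''; it is not --- the hypothesis $\gcd(N_{15}(\xi),15)=1$ only says $\xi$ is a local unit at the ramified primes, and $\xi$ could have arbitrarily large norm. You also have the residue fields and the prime identifications backwards: $3$ has order $4$ (not $2$) modulo $5$, so the residue field at $\mathfrak p_3$ is $\mathbb F_{81}$, not $\mathbb F_9$; and it is $\Phi_5(\omega_{15})=(\omega_{15}^5-1)/(\omega_{15}-1)$ that is a uniformizer at $\mathfrak p_3$ (the resultant of $\Phi_{15}$ with $\Phi_5$ is $3^{\phi(5)}$), while $\Phi_3(\omega_{15})$ lies above $5$. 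These are fixable, but they matter for the counting in your step~(4).

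More seriously, your step~(3) asserts that by multiplying by a unit one can reach ``any prescribed image'' in the quotient. If that were literally true the good/bad dichotomy would collapse: one could always take $B(x)=1$. In fact, modulo $\mathfrak p_5$ the value of $F_1(\omega_{15})$ is the \emph{fixed} element $\omega_3-1$, so you need the unit group to surject onto $(\mathcal O/\mathfrak p_5)^\times\cong\mathbb F_{25}^\times$; modulo $\mathfrak p_3$ the parameters $\pm,\,j,\,B$ give at most $20$ residues inside $\mathbb F_{81}^\times$ (since $\omega_{15}\equiv\omega_5$ there), so the unit group must supply exactly the missing index-$4$ coset structure --- and no more, else good and bad would coincide. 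None of this is checked, and it is the crux. Your step~(4) acknowledges this difficulty but does not resolve it.

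By contrast, the paper avoids all local analysis. It starts from a B\'ezout identity $1=-x\Phi_5(x)+(x^3+1)\Phi_3(x)$ to write any $F$ with $N_{15}(F)=k$ as $\alpha(x)(x^5-1)+\beta(x)\Phi_3(x)$ modulo $\Phi_3\Phi_5$ and $\Phi_{15}$. It then reduces $\alpha$ modulo $\Phi_3$ with coefficients taken mod $5$ (a linear polynomial with coefficients in $\{0,\pm1,\pm2\}$) and, via a handful of explicit quadratic identities, forces $\alpha$ into the shape $\pm x^j(x-1)^i$. A parallel reduction of $\beta$ modulo $\Phi_5$ with coefficients mod $3$ yields a short list of possible $B(x)$ (``Type~1'' or ``Type~2''), and further unit multiplications collapse Type~1 to $B=1$ and Type~2 to $B=x-1$. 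The $F_2$ form then follows from the $F_1$ form by a single symmetry. Everything is elementary polynomial arithmetic; no localization, no appeal to surjectivity of units onto residue quotients. Your CRT approach might be made to work, but it would require exactly the unit-group computation you flag as the main obstacle, carried out with the corrected residue fields.
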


We shall say that $\xi$ is {\em good} if $B(x)=1$ and {\em bad } if $B(x)=(x-1)$.

\begin{proof}

Suppose that $k=N_{15}(F)$ with $\gcd(k,15)=1$, then from 
$$ 1=-x\Phi_5(x) + (x^3+1)\Phi_3(x) $$
we can write
$$ F(x)= \alpha(x) (x^5-1) +  \beta(x) \Phi_3(x) - F(1)\Phi_5(x)\Phi_{15}(x), $$
with
$$ \alpha(x)= \frac{ (F(1)\Phi_{15}(x)- xF(x))}{x-1},\;\; \beta(x)= (x^3+1)F(x) $$
Dividing through by $\Phi_3(x)$ and reducing the coefficients of the remainder mod 5
$$ \alpha(x)= A(x) + 5 t_1(x) + q_1(x)\Phi_3(x), \;\;A(x)=Ax+B,  A,B\in \{0,\pm 1,\pm 2\}. $$
We can rule out $A=B=0$ since $5\nmid k$.
For the case $A$ or $B=0$ or $A=\pm B\neq 0$ we note that 
$$2x=(x-1)^2-\Phi_3(x)+5x,\;\;\; (x+1)=\Phi_3(x)-x^2, $$
and for the non zero $A$,$B$ with $A\neq \pm B$ that
\begin{align*} 2x+1=\Phi_3(x)-x(x-1),   &\;\;\;    x+2=(2-x)\Phi_3(x)+x^2(x-1),\\
x-2=2x(x-1)-2\Phi_3(x)+5x, & \;\;\; 2x-1=(4-2x)\Phi_3(x)+2x^2(x-1)-5. 
\end{align*}
Hence we can adjust $q_1(x)$  and  $t_1(x)$ to replace $A(x)$ by 
$$ A(x)=\pm x^j (x-1)^i,\;\;\; 0\leq i\leq 3. $$
Here we are only considering $F(x)$ on the 15th roots of unity so we are allowed to replace $x^{j}$ with $x^{j \text{ mod } 15}$ if $j$ is negative.

Replacing $\beta (x)$ by $(x-1)\beta_1(x) + \beta(1)\Phi_{15}(x)$ and repeating as necessary we
can write
$$ \beta(x)\Phi_3(x) =(x-1)^i \beta_2(x) \Phi_3(x)+ s_1(x)\Phi_3(x)\Phi_{15}(x). $$
Dividing by $\Phi_5(x)$ and reducing mod $3$
$$ \beta_2(x)=B(x)+ 3t_2(x) +q_2(x)\Phi_5(x), \;\; B(x)= a_3x^3+a_2x^2+a_1x+a_0, \;\; a_{\ell}\in\{0,\pm 1\}. $$
we can not have all the $a_{\ell}=0$ since $3\nmid k$.
Now, we are allowed to adjust by $x^5-1$ (by altering $q_2(x)$),  so that introducing an appropriate   power of $x$ we can think of $B(x)$ as 5  coefficients
$a_0,...,a_4$ arranged cyclically, with at most 4 of them $\pm 1$. If we have 4 of them non-zero, then we will have at least two coefficients the same sign and by subtracting or adding a $\Phi_5(x)$ as the value is 1 or -1
and reducing mod $3$ we can reduce to at most $3$ non-zero terms. A single term corresponds to $\pm x^j$
and two non zero terms to $\pm x^j(x\pm 1)$ or $\pm x^j(x^2\pm 1)$. If we have three terms and all are the same
then adding or subtracting a $\Phi_5(x)$ reduces to two non-zero terms. If all are consecutive in the cycle
we get $\pm x^j(x^2-x+1)$ or $\pm x^j(1+x-x^2)$ or $\pm x^j(1-x-x^2)$. If non-consecutive then we have two consecutive with one gap either end
reducing to 
$\pm x^j(1+x-x^3)$, $\pm x^j(1-x+x^3)$ or $\pm x^j(1-x-x^3)$, where we can write
$1+x-x^3=\Phi_5(x)-3x^3-x^2(1-x+x^2)$.

Thus, multiplying through by a power of $x$, changing the $M_{15}(F)$ by at most a sign, we can assume that
\begin{align}  F(x) = (x-1)^i( (x^5-1)\pm x^j \Phi_3(x)B(x) ) &  + 5t_1(x)(x^5-1)+3t_2(x)\Phi_3(x) \nonumber \\ & + t_3(x)\Phi_3(x)\Phi_5(x)+t_4(x)\Phi_{15}(x) \end{align}
where $B(x)$ is either a Type 1:
\be \label{type1}  1, (x+1), (x^2+1),  (1-x+x^2), \ee
or a Type 2:
\be \label{type2}  (x-1), (x^2-1), (1+x-x^2),(1-x-x^2),(1-x-x^3),(1-x+x^3). \ee
So far we have preserved the values at all the 15th roots of unity; we will need this decomposition in the proof of Theorem \ref{Z15}. For the Lemma we just need to preserve the value at the primitive $15$th roots of unity, up to multiplication by a unit in $\mathbb Z [\omega_{15}]$.
The Type 1 will give us the good cases and the Type 2 the bad.

For a primitve $15$th root of unity $x$ we have 
$$ (x-1)^{-1} = (x^2-1)(x^4-1)(x^7-1)(x^8-1)(x^{11}-1)(x^{13}-1)(x^{14}-1) $$
and
$$ 3=(x^5-1)(x^{10}-1), 5= (x^3-1)(x^6-1)(x^9-1)(x^{12}-1), $$
hence we can divide through by $(x-1)^i$ and up to a unit replace $F$ by
$$ F(x)=(x^5-1)\pm x^j \Phi_3(x)B(x) + g_3(x)\Phi_3(x)\Phi_5(x). $$
When $B(x)=(x+1)$ or $(x^2+1)$ multiplying through by the other gives $(x^2+1)(x+1)=-x^4+\Phi_5(x)$
and when $B(x)=1-x+x^2$ multiplying by $(x+1)^2$ gives $(x^3+1)(x+1)=-x^2+\Phi_5(x)$ reducing the Type 1 to $B(x)=1$, where since $(1+x),(1+x^2)$ are $-x^2$ or $-x$ mod $\Phi_3$ this just changes $(x^5-1)$
by a power of $x$ which can be divided out. 
Similarly
$$ (1+x-x^2)(1+x^2)=x^2(x^2-1)+\Phi_5(x)-3x^4,   (1-x-x^2)(1+x^2)=(x^2-1)-\Phi_5(x)+3, $$
$$ (1-x-x^3)(1+x)=(x-1)-\Phi_5(x)+3,\;\; (1-x+x^3)(1+x)=x(x-1)+\Phi_5(x)-3x^2$$
with multiplication by $(x^2+1)$ removing the $(x+1)$, reducing $(x^2-1)$ to $(x-1)$.
Hence in Type 2 we can always reduce to $B(x)=(x-1)$.
Thus for $x$  a 15th root of unity we have shown
$$ F(x)=u\left( (x^5-1)\pm x^j \Phi_3(x)B(x) + g_3(x)\Phi_3(x)\Phi_5(x) \right) $$
with $u$ a unit in $\mathbb Z[x]$ and  $B(x)=1$, which we will call a good case (from Type 1), or $B(x)=(x-1)$, which we will call bad from the Type 2.
Notice we can always replace $g_3(x)$ by $g_3(x)-g_3(1)\Phi_{15}(x)$ and hence the $g_3(x)\Phi_3(x)\Phi_5(x)$
by $g_4(x)(x-1)\Phi_3(x)\Phi_5(x)$ as stated in the Lemma.

When $B(x)=(x-1)$ we plainly obtain the second form with $j'=15-j$ by dividing by $\pm x^j$. When $B(x)=1$ we 
multiply by $x^4(x+1)$ where
for the first term
$ x^4(x+1)=-x^6 +x^4\Phi_3(x)$
and for the second
$$ x^4(x+1) = (x+1)\Phi_{15}(x) -(x-1)^2(x^4-\Phi_5(x))(x^2-\Phi_5(x)) $$
hence we get
$$-x^6 (  (x^5-1) \pm x^j (x^3-1)(x-1) )+ (x-1)\Phi_3(x)\Phi_5(x)g_2(x),$$
and dividing out an $(x-1)$ and power of $x$ gives the representation that we want with $ j'=15-j.$
Likewise we can get the first from the second.

\end{proof}

The concept of good or bad is well defined and we have a parity type relationship.

\begin{lemma} An element $\xi$ in $\mathbb Z[\omega_{15}]$, $\gcd(N_{15}(\xi),15)=1$,   can not be both good and bad,
and $1$ is bad. If $\xi_1$, $\xi_2$ are both good or both bad then $\xi_1\xi_2$ is bad, otherwise $\xi_1\xi_2$ is good.
The conjugates of $\xi$ are either all good or all bad.
\end{lemma}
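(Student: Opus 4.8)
\medskip
\noindent\emph{Strategy.} All four assertions come out of a single construction, so I would split the argument into a reduction and the construction itself.

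\smallskip
\noindent\emph{Reduction.} The type is unchanged when $\xi$ is multiplied by a unit: if $\xi=u_1F_1(\omega_{15})$ with $F_1$ of the good (respectively bad) shape in Lemma~\ref{goodbad}, then $v\xi=(vu_1)F_1(\omega_{15})$ exhibits $v\xi$ with the same shape. Hence the type depends only on the ideal $(\xi)$, and, $\mathbb Z[\omega_{15}]$ being a principal ideal domain, it is really a function on the group $I$ of ideals of $\mathbb Z[\omega_{15}]$ coprime to $15$. The three product rules say exactly that the rule ``$\xi$ good $\iff\chi(\xi)=-1$'' is induced by a group homomorphism $\chi\colon I\to\{\pm1\}$ with $\chi((1))=+1$, and the statement about conjugates says that $\chi$ is invariant under $\mathrm{Gal}(\mathbb Q(\omega_{15})/\mathbb Q)$. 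Conversely, once such a $\chi$ has been produced the whole lemma follows at once: no $\xi$ can be both good and bad because $\chi$ returns a single value; $1$ is bad because $\chi((1))=+1\neq-1$; the product rules are the multiplicativity of $\chi$; and the conjugates of $\xi$ all have type $\chi(\xi)$ by the $\mathrm{Gal}$--invariance.

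\smallskip
\noindent\emph{Construction.} I would take $\chi$ to be a quadratic residue symbol: fix $\alpha\in\mathbb Z[\omega_{15}]$ and, for a prime $\mathfrak q\nmid 15$, set $\chi(\mathfrak q)=+1$ or $-1$ according as $\alpha$ is or is not a square in the residue field $\mathbb Z[\omega_{15}]/\mathfrak q$, extended multiplicatively (with the usual product--formula conventions at the finitely many ramified places, so that $\chi$ is defined on all of $I$). Such a $\chi$ is multiplicative by construction; it is trivial on units because a unit spans the unit ideal, on which the symbol is $+1$; and it is $\mathrm{Gal}$--invariant exactly when $\sigma(\alpha)/\alpha$ is a square in $\mathbb Q(\omega_{15})^{\times}$ for every $\sigma$, that is, when $L:=\mathbb Q(\omega_{15},\sqrt{\alpha})$ is normal over $\mathbb Q$. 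What is left is to choose $\alpha$ so that $\chi$ separates the two types. By Lemma~\ref{goodbad} a good $\xi$ equals, up to a unit, $G(\omega_{15})$ with
\[
G(x)=(x^5-1)+\Phi_3(x)\bigl(\pm x^{j}+(x-1)\Phi_5(x)g(x)\bigr),
\]
and a bad one the same with the last factor replaced by $(x-1)\bigl(\pm x^{j}+\Phi_5(x)g(x)\bigr)$; since $\omega_{15}-1$ is a unit while $\Phi_3(\omega_{15})$ generates the prime above $5$ and $\Phi_5(\omega_{15})$ (as does $\omega_{15}^5-1$) the prime above $3$, the only difference between the two normal forms is the extra factor $(x-1)$ on the middle term. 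Applying the quadratic reciprocity law in $\mathbb Z[\omega_{15}]$, one rewrites $\chi(G(\omega_{15}))$ as $\left(\frac{G(\omega_{15})}{\alpha}\right)$ times explicit local factors at the places over $2$, $3$, $5$ and at the archimedean places; the pieces common to the good and bad forms then cancel, and the good/bad difference collapses to the single local factor contributed by $\omega_{15}-1$. One finally picks $\alpha$ so that this factor is $-1$ while the remaining contribution is $+1$.

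\smallskip
\noindent\emph{Main obstacle.} The crux is identifying $\alpha$, equivalently the field $L$: it must be selected among the $\sigma$--invariant classes of $\mathbb Q(\omega_{15})^{\times}/(\mathbb Q(\omega_{15})^{\times})^2$ and so that the $\omega_{15}-1$ factor above comes out $-1$. The explicit lists of good and bad primes make plain that the resulting splitting is not governed by a congruence on the primes themselves (for instance $31$ and $181$ are good while $61$ and $331$ are bad, yet $181\equiv 61\pmod{120}$ and $331\equiv31\pmod{300}$), so $\alpha$ is a genuinely non--rational element of $\mathbb Q(\omega_{15})$ and $L/\mathbb Q$ is non--abelian of degree $16$; pinning $L$ down, and then carrying through the reciprocity bookkeeping that evaluates $\chi$ on the representatives of Lemma~\ref{goodbad}, is the substantial part. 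One ingredient is transparent right away, namely invariance of the type under complex conjugation: from
\[
-x^5G(x^{-1})=(x^5-1)\mp x^{3-j}\Phi_3(x)+x^{-2}(x-1)\Phi_3(x)\Phi_5(x)g(x^{-1})
\]
(the last summand becoming a polynomial after multiplying by a power of $x$, harmless on $\omega_{15}$) one sees that $-\omega_{15}^{5}\,\overline{G(\omega_{15})}$ is again of the good normal form whenever $G$ is, hence $\overline{G(\omega_{15})}$ is good, so complex conjugates always share their type, as the lemma demands.
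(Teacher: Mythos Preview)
Your proposal reformulates the lemma correctly---the four assertions are indeed equivalent to the existence of a Galois-invariant homomorphism $\chi$ from principal ideals coprime to $15$ to $\{\pm1\}$ with $\chi(\xi)=-1$ exactly when $\xi$ is good---but it does not prove it. The entire content of the lemma is that such a $\chi$ exists, and you explicitly leave the identification of $\alpha$ (and the verification that the resulting symbol separates the two normal forms) as ``the substantial part.'' Without that, the argument is circular: the only evidence that a suitable $\alpha$ exists is the lemma itself. Your one concrete computation, the invariance under complex conjugation, is correct but covers only a single Galois automorphism and only the good shape.

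The paper's proof is completely different in spirit and entirely elementary. For ``not both good and bad'' it assumes two representations, clears denominators using the explicit unit basis $(x-1),(x+1),(x^3+1)$ to get a polynomial identity modulo $\Phi_3\Phi_5$ and $\Phi_{15}$, and then evaluates $N_3$ and $N_5$ to force contradictory parities on the exponent of $(x-1)$. That $1$ is bad is immediate from $(x^5-1)-(x^3-1)$ being a unit. The product rule is a direct manipulation: multiply the two normal forms, then use identities such as $\Phi_5(x)=\Phi_3(x)+x^3(x+1)$ and $\Phi_3(x)=(x-1)^2+3x$ to strip a factor of $(x-1)$ and read off the resulting type. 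Galois invariance is handled by substituting $x\mapsto x^k$ and checking, case by case over $k\bmod 3$ and $k\bmod 5$, that the auxiliary factors $1+x^5+\cdots+x^{5(k-1)}$ and $1+x^3+\cdots+x^{3(k-1)}$ reduce (mod $\Phi_3$, $\Phi_5$) to removable units. None of this requires identifying a quadratic extension or invoking reciprocity; it is all explicit polynomial arithmetic in $\mathbb Z[x]$ modulo cyclotomic factors.
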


\begin{proof}
We rule out $F(x)$ being both good and bad:
$$ u_1\left((x^5-1)\pm x^j\Phi_3(x)+ g_4(x)\Phi_3(x)\Phi_5(x)\right)=u_2\left(\Phi_5(x)\pm x^{j'}\Phi_3(x)+ g_5(x)\Phi_3(x)\Phi_5(x)\right). $$
Taking a basis $(x-1),(x+1),(x^3+1)$ for the units,  moving any negative powers to the other side, we obtain equality
at the 15th roots of unity and hence  a  polynomial 
identity:
\begin{align*}\pm x^k  (x-1)^{r_1}(x+1)^{s_1}(1+x^3)^{t_1}\left( (x^5-1)\pm x^j\Phi_3(x) \right) & =  \\ (x-1)^{r_2}(x+1)^{s_2}(1+x^3)^{t_2}\left( \Phi_5(x) \pm x^{j'}\Phi_3(x) \right)  + & g(x)\Phi_3(x)\Phi_5(x) + h(x)\Phi_{15}(x). 
\end{align*}
Observing that $\Phi_{15}(\omega_3)=-5\omega_3$, $\Phi_{15}(\omega_5)=3(\omega_5^3+1),$  taking $N_3$ and $N_5$ gives
$$ 3^{r_1+1}4^{t_1}\equiv 3^{r_2} 4^{t_2}\text{ mod 5, } \;\;\; 5^{r_1}\equiv 5^{r_2} \text{ mod 3}, $$
but the first requires $r_1$,$r_2$ to have opposite parity and the second the same parity. Since $(x^5-1)-(x^3-1)$ is a unit
we see that $1$ (or any unit) must be bad.

Observe that the product of a bad and a good case gives a good case and the product of two good or two bad cases a bad case; essentially this is the parity of the power of $(x-1)$ on the $\Phi_3(x)$ and $\Phi_5(x)$ terms, but to be explicit
the product leads to
$$ u\left( (x^5-1)(x-1)\Phi_5(x)\pm x^j \Phi_3 (x) \Phi_3(x)B_1(x)B_2(x)\right)  \text{ mod } \Phi_3(x)\Phi_5(x). $$
If one is bad, say $B_1(x)=(x-1)$, then we can write $\Phi_5(x)=\Phi_3(x)+x^3(x+1)$ and factor out the $x^3(x+1)(x-1)$
to get $u((x^5-1)\mp x^j \Phi_3(x)B_2(x))$ and we get good or bad as the other is good or bad. 
In the case both are good $B_1(x)=B_2(x)=1$ we write $\Phi_5(x)=-x^5+ (1+x^3)\Phi_3(x)$ and $\Phi_3(x)=(x-1)^2+3x$
and factoring out $-x^5(x-1)$ produces a bad case.

Notice also if $F(x)$ gives a good or bad case then so do all its conjugates $F(x^k)$, $\gcd(k,15)=1$.
To see this observe that 
$(x^{5k}-1)=(x^5-1)t(x)$ where $$ t(x)=(1+x^5+\cdots + x^{5(k-1)}) \equiv 1 \text{ or } -x^{10} \text{ mod $\Phi_3(x)$  as $k\equiv 1$ or 2 mod 3}. $$
When $\Phi_3(x)B(x)=(x^3-1)$ we use
$ (x^{3k}-1)=(x^3-1)t_2(x)$ where $t_2(x)=1+x^3+\cdots +x^{3(k-1)} $ mod $\Phi_5(x)$ takes the
form $1$, $1+x^3$, $1+x^3+x^6=-x^2(1+x^2)$, $-x^{12}$ as $k\equiv 1,2,3$ or $4$ mod 5, which can all be removed
to return to a bad case. In the good case we multiply by the unit $(x^k-1)$ where $1+x+\cdots + x^{k-1}\equiv 1$ or $-x^2$ mod $\Phi_3(x)$ enabling $(x-1)$ to be factored out to return to a good form.

\end{proof}

Recalling that $\mathbb Z[\omega_{15}]$ has uniqueness of factorisation we can see that we can divide the primes 
in $\mathbb Z[\omega_{15}]$, not dividing 15,  into those with good or bad representations and a  product of these will be good iff 
if it contains an odd number of good primes (unchanged by which conjugate is used).
In particular for an integer $k$ which is a 15-norm, $k=N_{15}(\xi)$, $\gcd(k,15)=1$,  it makes sense to say that $k$ is {\em 15-norm good}
or {\em bad} as $\xi $ is good  or bad. In particular a $k$ will be  norm-15 good iff its factorisation in $\mathbb Z$  contains an odd number of  norm-15 good  prime powers $p^r=N_{15}(\mathscr{P})$, 
where ${\mathscr{P}}$ is a good prime in $\mathbb Z[\omega_{15}]$.

Finally we can obtain the multiples of any integer which is 15-norm good:

\begin{lemma}\label{goodachieve} We can obtain $3^2k$ and $5^2k$ as a $15\times 15$ circulant determinant  for any  $k$, $\gcd(k,15)=1$,  which is  15-norm good. \end{lemma}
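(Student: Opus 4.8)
The plan is to show that the two ``good'' representatives produced by Lemma \ref{goodbad} already realise $3^2k$ and $5^2k$, so that essentially no new construction is needed. Since $k$ is a $15$-norm that is $15$-norm good, I would fix a good $\xi\in\mathbb{Z}[\omega_{15}]$ with $N_{15}(\xi)=k$ and, by Lemma \ref{goodbad}, write $\xi=u_1F_1(\omega_{15})=u_2F_2(\omega_{15})$ with $F_1(x)=(x^5-1)\pm x^j\Phi_3(x)+(x-1)\Phi_3(x)\Phi_5(x)g_1(x)$ and $F_2(x)=(x^3-1)\pm x^{j'}\Phi_5(x)+(x-1)\Phi_3(x)\Phi_5(x)g_2(x)$ (the good case $B(x)=1$). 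The claim will be that $M_{15}(F_1)=\pm 3^2k$ and $M_{15}(F_2)=\pm 5^2k$; since $M_{15}(-F)=-M_{15}(F)$ (the order $15$ being odd) the signs are immaterial, and reducing a sign-adjusted $F_1$ or $F_2$ modulo $x^{15}-1$ then exhibits $3^2k$ and $5^2k$ as $15\times15$ integer circulant determinants.

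To verify this I would compute $M_{15}(F_1)=N_1(F_1)N_3(F_1)N_5(F_1)N_{15}(F_1)$ factor by factor, using that the summand $(x-1)\Phi_3(x)\Phi_5(x)g_1(x)$ vanishes at $x=1$ and modulo both $\Phi_3(x)$ and $\Phi_5(x)$ (it carries those factors explicitly), hence affects only $N_{15}$. At $x=1$ the term $x^5-1$ vanishes, so $N_1(F_1)=F_1(1)=\pm\Phi_3(1)=\pm3$. Modulo $\Phi_3(x)$, $F_1\equiv x^5-1=\Phi_1(x)\Phi_5(x)$, so $N_3(F_1)$ is the resultant of $\Phi_3$ with $\Phi_1\Phi_5$, which by the cyclotomic resultant facts recalled in the introduction equals $\Phi_3(1)\cdot1=3$. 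Modulo $\Phi_5(x)$, $F_1\equiv\pm x^j\Phi_3(x)$, a unit times $\Phi_3(x)$, so $N_5(F_1)$ equals the resultant of $\Phi_3$ and $\Phi_5$, namely $1$. Finally, writing $\sigma_a$ for the automorphism $\omega_{15}\mapsto\omega_{15}^a$, $N_{15}(F_1)=\prod_{\gcd(a,15)=1}\sigma_a(u_1^{-1}\xi)=\text{Norm}(u_1)^{-1}N_{15}(\xi)=k$, because a unit of the CM field $\mathbb{Q}(\omega_{15})$ has norm $+1$. Multiplying, $M_{15}(F_1)=(\pm3)\cdot3\cdot1\cdot k=\pm9k$. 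The computation for $F_2$ is the mirror image with $3$ and $5$ swapped: $N_1(F_2)=F_2(1)=\pm\Phi_5(1)=\pm5$; modulo $\Phi_3(x)$, $F_2\equiv\pm x^{j'}\Phi_5(x)$, giving $N_3(F_2)=1$; modulo $\Phi_5(x)$, $F_2\equiv x^3-1=\Phi_1(x)\Phi_3(x)$, giving $N_5(F_2)=\Phi_5(1)\cdot1=5$; and $N_{15}(F_2)=k$ as before, so $M_{15}(F_2)=\pm25k$.

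I do not expect a single hard step — the proof is largely bookkeeping — but three points need care. First, one must confirm that the ``error'' summand $(x-1)\Phi_3(x)\Phi_5(x)g(x)$ genuinely drops out of $N_1$, $N_3$ and $N_5$ (it does, thanks to the explicit factors $(x-1)$, $\Phi_3(x)$, $\Phi_5(x)$ it contains, the first of which is exactly what kills it at $x=1$). Second, one must pin down $N_{15}(F_1)=N_{15}(F_2)=k$ \emph{exactly}, not merely up to a unit or sign; this is where one uses that $\xi$ is a fixed element of norm $k$ together with the fact that units of $\mathbb{Q}(\omega_{15})$ have norm $+1$. Third, one must track the overall sign, which is absorbed by $M_{15}(-F)=-M_{15}(F)$. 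The structural reason the argument works is precisely that the good representatives carry $B(x)=1$ rather than $B(x)=x-1$: this is what makes $N_3(F_1)=3$ and $N_5(F_2)=5$ come out with the correct single power of the ramified prime, whereas an extra factor $(x-1)$ on the $\Phi_3$- or $\Phi_5$-term (the bad case) would spoil it — in agreement with Theorem \ref{generalisation} and with the fact that $1$ is bad, so that $3^2,5^2\notin S_{15}$.
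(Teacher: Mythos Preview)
Your proposal is correct and follows essentially the same approach as the paper: use the good representatives $F_1,F_2$ from Lemma~\ref{goodbad} and compute $N_1,N_3,N_5,N_{15}$ separately to obtain $M_{15}(F_1)=\pm 3^2k$ and $M_{15}(F_2)=\pm 5^2k$. Your write-up is in fact more detailed than the paper's (you spell out the resultant computations, the $+1$ norm of units in the CM field $\mathbb{Q}(\omega_{15})$, and the sign handling via $M_{15}(-F)=-M_{15}(F)$), but the argument is the same.
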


\begin{proof}
If $k$ is a good 15-norm then $k=N_{15}(F)$ for an $F$ of the form 
$$F(x)=(x^5-1)\pm x^j \Phi_3(x)+(x-1)\Phi_3(x)\Phi_5(x)g_1(x)$$
and $M_{15}(F)=\pm 3^2k$ with $N_1(F)=\pm 3$, $N_3(F)=3$ and $N_5(F)=1$.

Likewise we get $\pm 5^2 k$ from  $F(x)=(x^3-1)\pm x^j \Phi_5(x)+(x-1)\Phi_3(x)\Phi_5(x)g_2(x)$. 

\end{proof}

\section{Proof of Theorem \ref{Z15}} \label{thm}

In this section we prove the following:

\begin{theorem} If $F$ is in $\mathbb Z [x]$ and $M_{15}(F)=3^2m$ or $5^2m$ with $\gcd(15,m)=1$, then either $N_{15}(F)$ is 15-norm good, or $p\mid N_3(F)$ for
some prime $p\equiv 7$ or 13 mod 15, or $p\mid N_5(F)$ for some prime $p\equiv 11$ mod 15.
\end{theorem}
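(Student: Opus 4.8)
The plan is to transport the hypothesis through the polynomial normal form of Lemma~\ref{goodbad} and then to read off, from the behaviour of $F$ at the cube, fifth and fifteenth roots of unity, what $M_{15}(F)=3^2m$ (with $\gcd(m,15)=1$) forces. I treat that case; $M_{15}(F)=5^2m$ is parallel, with the roles of $3$ and $5$ interchanged (note $3$ is inert in $\mathbb Z[\omega_5]$ while $5$ is inert in $\mathbb Z[\omega_3]$, and $3$, $5$ ramify in $\mathbb Z[\omega_3]$, $\mathbb Z[\omega_5]$ respectively).

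First the local picture. From $M_{15}(F)=N_1(F)N_3(F)N_5(F)N_{15}(F)$, the congruences $N_{rp^j}(F)\equiv N_r(F)^{\phi(p^j)}\pmod p$, and the fact that $3$ generates a prime of norm $3^4$ in $\mathbb Z[\omega_5]$ (so $3\mid N_5(F)$ would force $3^4\mid N_5(F)$, impossible since $v_3(M_{15}(F))=2$), one deduces $5\nmid M_{15}(F)$, $3\nmid N_5(F)N_{15}(F)$, and $v_3(N_1(F))=v_3(N_3(F))=1$; in particular $\gcd(N_{15}(F),15)=1$, so Lemma~\ref{goodbad} applies to $F(\omega_{15})$. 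Reviewing its proof, after multiplying $F$ by a suitable $\pm x^k$ (which changes at most the sign of $N_1(F)$) we may assume $F(x)=(x-1)^i\bigl((x^5-1)\pm x^j\Phi_3(x)B(x)\bigr)+5t_1(x)(x^5-1)+3t_2(x)\Phi_3(x)+t_3(x)\Phi_3(x)\Phi_5(x)+t_4(x)\Phi_{15}(x)$ with $0\le i\le 3$ and $B$ of Type~1 or Type~2. If $B$ is of Type~1 then $F(\omega_{15})$ is good and we are done, so assume $B$ is of Type~2.

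Now read off the arithmetic. Since $\Phi_3(\omega_3)=0$ and $\Phi_{15}(\omega_3)=-5\omega_3$, the $B$-term disappears from $F(\omega_3)=(\omega_3-1)^{i+1}(\omega_3+1)+5t_1(\omega_3)(\omega_3^2-1)-5\omega_3t_4(\omega_3)$, and the requirement $v_{1-\omega_3}(F(\omega_3))=1$ pins down $t_4(1)\bmod 9$, $t_1(1)\bmod 3$ and the sign in each of the finitely many cases for $i$. Since $\omega_5^5=1$, $\Phi_{15}(\omega_5)=3(\omega_5^3+1)$, and $\Phi_3(\omega_5)$, $\omega_5^3+1$ are units of $\mathbb Z[\omega_5]$, we get $F(\omega_5)=\Phi_3(\omega_5)\bigl(\pm(\omega_5-1)^i\omega_5^jB(\omega_5)+3t_2(\omega_5)\bigr)+3(\omega_5^3+1)t_4(\omega_5)$, so $F(\omega_5)\equiv\pm(\omega_5-1)^i\omega_5^j\Phi_3(\omega_5)B(\omega_5)\pmod 3$; and at $\omega_{15}$ one factors off the prime above $3$ (generated by $\omega_{15}^5-1$, and also by $\Phi_5(\omega_{15})$) and the prime above $5$ (generated by $\Phi_3(\omega_{15})$) to see how $B(\omega_{15})$ governs $F(\omega_{15})$ modulo the square of each of these primes. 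The explicit inputs that make the residue classes $7,13,11\bmod 15$ enter are short resultant computations over the Type~2 list --- for instance $N_5(1+x-x^2)=\Phi_5(\phi)\Phi_5(\bar\phi)=11$, where $\phi,\bar\phi$ are the roots of $x^2-x-1$, and $N_3(1-x+x^3)=7$ --- together with the companion $15$-norms (e.g.\ $N_{15}(1+x-x^2)=31$, a ``good'' prime).

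The real work --- and the point where the argument is delicate --- is the ensuing simultaneous bookkeeping: for each Type~2 form of $B$ and each admissible pair $(i,\text{sign})$ compatible with $v_{1-\omega_3}(F(\omega_3))=1$, one must track the factorisations of $F(\omega_3)$ in $\mathbb Z[\omega_3]$, of $F(\omega_5)$ in $\mathbb Z[\omega_5]$, and of $F(\omega_{15})$ in $\mathbb Z[\omega_{15}]$, and show that if $N_3(F)$ has no prime factor $\equiv 7,13\bmod 15$ and $N_5(F)$ has no prime factor $\equiv 11\bmod 15$, then the local data at $\omega_3$ and $\omega_5$ is only consistent with $F(\omega_{15})$ being good --- contradicting that $B$ is genuinely Type~2, via the well-definedness of ``good/bad'' (the parity argument comparing the $N_3$- and $N_5$-contributions of the unit factors, as in the lemma following Lemma~\ref{goodbad}). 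The subtlety is that the special prime carried by $B$ itself (the $11$, the $7$, or the good prime $31$ above) can be cancelled by the free parameters $t_1,t_2,t_3,t_4$; one must verify that whenever this cancellation occurs a prime in one of the prescribed classes necessarily reappears in $N_3(F)$ or $N_5(F)$. What ties these $2$- and $3$-adic congruences to residue classes modulo $15$ is the splitting dichotomy: a non-$15$ prime dividing $N_3(F)$ that is not of the forbidden form is either inert in $\mathbb Q(\omega_3)$ or splits both in $\mathbb Q(\omega_3)$ and in $\mathbb Q(\sqrt 5)$ (i.e.\ is $\equiv 1,4\bmod 15$), while the primes splitting completely in $\mathbb Q(\omega_5)$ but inert in $\mathbb Q(\omega_3)$ are exactly those $\equiv 11\bmod 15$.
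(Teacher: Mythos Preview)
Your setup is right: you correctly deduce $\gcd(N_{15}(F),15)=1$, invoke the polynomial normal form from the proof of Lemma~\ref{goodbad}, and reach the Type~1/Type~2 dichotomy for $B$. You also write down the key congruence $F(\omega_5)\equiv\pm(\omega_5-1)^i\omega_5^j\Phi_3(\omega_5)B(\omega_5)\pmod 3$, and its companion $F(\omega_3)=(\omega_3-1)^{i+1}(\omega_3+1)+5(\cdots)$ at $\omega_3$.

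But from there you head in the wrong direction, and the proof is not completed. The ``delicate bookkeeping'' you describe --- tracking factorisations of $F(\omega_3),F(\omega_5),F(\omega_{15})$ for each triple $(B,i,\text{sign})$ and worrying about cancellation by the $t_j$ --- is never actually carried out; and as stated it is not workable, since the $t_j$ are arbitrary and one cannot follow individual primes without first eliminating them. Your attempt to ``pin down $t_4(1)\bmod 9$, $t_1(1)\bmod 3$ and the sign'' from $v_{1-\omega_3}(F(\omega_3))=1$ is unnecessary, and the computations of $N_3(B)$ and $N_{15}(B)$ are red herrings (the $B$-term vanishes at $\omega_3$, so $N_3(B)$ never enters).

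The step you are missing is simply to \emph{take norms of the two congruences you already have}. Since $N_5(\Phi_3)=1$, $N_5(x-1)=5$, and --- the one uniform check over the Type~2 list --- $N_5(B)\in\{5,11\}$, so $N_5(B)\equiv 2\pmod 3$, your congruence at $\omega_5$ gives
\[
N_5(F)\;\equiv\; 5^i\cdot 2\;\equiv\;(-1)^{i+1}\pmod 3,
\]
and from $F(\omega_3)\equiv(\omega_3-1)^i(\omega_3^2-1)\pmod 5$ one gets $N_3(F)\equiv 3^{i+1}\pmod 5$. The $t_j$ have vanished: they sit behind a factor of $3$ or $5$ or of $\Phi_3,\Phi_5,\Phi_{15}$, so in each modulus they are zero --- there is no cancellation issue. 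Pairing these residues with the case hypothesis ($N_3(F)=3m_1$, $N_5(F)=m_2$ in the $3^2m$ case; $N_3(F)=m_1$, $N_5(F)=5m_2$ in the $5^2m$ case) forces, depending on the parity of $i$, either $m_2\equiv -1\pmod 3$ or $m_1\equiv\pm 2\pmod 5$. In the first event $m_2=N_5(F)$ (or $N_5(F)/5$) contains a prime $p\equiv 2\pmod 3$ to an odd power, hence $p$ splits in $\mathbb Z[\omega_5]$, so $p\equiv 1\pmod 5$ and $p\equiv 11\pmod{15}$; in the second $m_1$ contains a prime $p\equiv\pm 2\pmod 5$ to an odd power in $N_3(F)$, hence $p\equiv 1\pmod 3$ and $p\equiv 7$ or $13\pmod{15}$. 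No case analysis on $B$ or on the sign is needed.
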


From Lemma \ref{goodachieve} we know that we can achieve $k3^2$ and $k5^2$ when $k$ is 15-norm good
and in the next section we show that we can achieve $3^2p$ and $5^2p$ whenever $p\equiv 7,11$ or 13 
mod 15. We can achieve any  multiple $m$ of these with 
$\gcd(m,15)=1$. So these are exactly the $15\times 15$ determinant values.

To complete the proof of Theorem \ref{Z15} we observe that a 15-norm good $k$  must contain at least one norm-15 good  prime power $p^r=N_{15}(\mathscr{P})$ 
where ${\mathscr{P}}$ is a good prime in $\mathbb Z[\omega_{15}]$. We characterize these $p^r$ in Section \ref{factorsinZ15}.

\begin{proof}

Suppose $F\in \mathbb Z[x]$ has $M_{15}(F)=3^2k$ or $5^2k$, $\gcd(k,15)=1$.
Since $N_3(F)\equiv F(1)^2$ mod 3, $N_{15}(F)\equiv N_5(F)^2$ mod 3, $N_5(F) \equiv F(1)^4$ mod 5, $N_{15}(F)\equiv N_3(F)^4$ mod 5,
and since 3 and 5 remain irreducible in $\mathbb Z[\omega_5]$ and $\mathbb Z[\omega_3]$ respectively we can't have
$3\parallel N_5(F)$ or $5\parallel N_3(F)$, we must have $\gcd(15,N_{15}(F))=1$ and $3\nmid N_5(F)$, $5\nmid N_3(F)$
and in the first case $3\parallel F(1)$, $N_3(F)=3m_1$, $N_5(F)=m_2,$ $\gcd(m_1m_2,15)=1$ and in the 
second  $5\parallel F(1)$, $N_3(F)=m_1$, $N_5(F)=5m_2,$ $\gcd(m_1m_2,15)=1$.

We suppose we have a  Type 2 decomposition \eqref{type2}. Observe that all the $B(x)$ of Type 2 have $N_5(B(x))=5$ or $11$.
Since  $\Phi_{15}(\omega_5)=3(1+\omega_5^3)$, $\Phi_{15}(\omega_3)=-5\omega_3$ we obtain
$$ N_5(F) \equiv 5^i\cdot 2 \equiv (-1)^{i+1} \text{ mod 3 },\;\;\; N_3(F)\equiv 3^{i+1} \text{ mod } 5. $$

Suppose that $i$ is even, then in the first case $m_2\equiv -1$ mod $3$ so the factorisation of $m_2$ must contain an odd 
power of a prime $p\equiv 2$ mod 3. Since the power is odd the prime must split completely in $\mathbb Z[\omega_5]$
so must be 1 mod 5. That is $N_5(F)$ must contain a prime $p\equiv 11$ mod 15.  In the second case we get $m_1\equiv \pm 2 $ mod 5. Hence $m_1$ must contain an odd power
of a prime $p\equiv \pm 2$ mod 5 and since it's an odd power in $N_3(F)$ must be 1 mod 3. That is $p\equiv 7$ or 13 
mod 15.

If $i$ is odd then in the first case $m_1\equiv \pm 2$ mod 5 and in the second $m_2\equiv -1$ mod 3 with the same conclusions.

If we have a Type 1 decomposition then, as in the proof of Lemma \ref{goodbad},  $N_{15}(F)$ is 15-norm good.

\end{proof}

\section{Obtaining $3^2p$ and  $5^2p$ for $p\equiv 7,11$ or $13$  mod 15.}

To show that we obtain $3^2p$ and $5^2p$ for all the $p\equiv 7$ or 13 mod 15 we begin by showing that these primes
must be 3-norms of a particular form:

\begin{lemma}\label{L7mod15} If $p\equiv 1$ mod 3 then 
$$p=N_3(a+bx +5(Ax+B))$$
 for some $A,B\in \mathbb Z,$ with $(a,b)=(1,0),(2,0),(3,1),(4,3)$ as $p\equiv 1,4,7,13$ mod 15.

If $p\equiv 7$ or 13 mod 15 then
$$ p=N_3( a+bx + 5(x-1)(Cx+D)) $$
for some $C,D\in \mathbb Z$ with  $(a,b)=(2,3)$ or $(3,-1)$   as $p\equiv 7$ or 13 mod 15.
%$$ (a,b)=\begin{cases} (3,1) & \mbox{ if $p\equiv 7$ mod 15,} \\ (3,-1) & \mbox{ if $p\equiv 13$ mod 15,} \\ (1,0) & \mbox{ if $p\equiv 1$ mod 15,} \\(2,0)  & \mbox{ if $p\equiv 4$ mod 15.} \end{cases} $$
%If $p\equiv 7$ mod 15, then $p=N_3(\alpha+\beta\omega)$ for some $\alpha\equiv 3$ mod 5 and $\beta\equiv 1$ mod 5, where $\omega$ is a primitive cube root of unity.

\end{lemma}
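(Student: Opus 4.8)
The plan is to recast both statements as norm equations in $\mathbb{Z}[\omega_3]$ with a prescribed residue class, exploiting that $\mathbb{Z}[\omega_3]$ is a PID in which $5$ is inert and $3$ ramifies. Reducing a polynomial modulo $\Phi_3(x)=x^2+x+1$ gives $N_3(u+vx)=u^2-uv+v^2=\operatorname{Norm}_{\mathbb{Q}(\omega_3)/\mathbb{Q}}(u+v\omega_3)$, so for $p\equiv1$ mod $3$, which splits, I would fix $\pi_0=u_0+v_0\omega_3$ with $N_3(\pi_0)=p$; every element of norm $p$ is then $\varepsilon\pi_0$ or $\varepsilon\overline{\pi_0}$ with $\varepsilon\in\{\pm1,\pm\omega_3,\pm\omega_3^2\}$. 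Since $5\equiv2$ mod $3$, the quotient $\mathbb{Z}[\omega_3]/5\cong\mathbb{F}_{25}$ is a field on which $N_3$ induces the field norm $z\mapsto z^{6}\colon\mathbb{F}_{25}^{\times}\to\mathbb{F}_5^{\times}$, surjective with kernel of order $6$; the six units reduce to six \emph{distinct} elements, all of norm $1$, hence to the whole kernel. Likewise $\mathfrak{p}=(1-\omega_3)$ is the prime above $3$, with residue field $\mathbb{F}_3$ and $\omega_3\equiv1$ mod $\mathfrak{p}$.

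For the first assertion, the polynomial $a+bx+5(Ax+B)$ takes at $\omega_3$ exactly the values of $(a+b\omega_3)+5\mathbb{Z}[\omega_3]$, so I need a norm-$p$ element congruent to $a+b\omega_3$ mod $5$. The reductions mod $5$ of the associates $\{\varepsilon\pi_0\}$ form the coset $(\pi_0\bmod5)\cdot\ker(N)$ in $\mathbb{F}_{25}$, which already exhausts the whole fibre $N^{-1}(p\bmod5)$, both being sets of six elements. Hence any target $(a,b)$ with $a^2-ab+b^2\equiv p$ mod $5$ is realized, and rewriting $\xi-(a+b\omega_3)\in5\mathbb{Z}[\omega_3]$ as $5(A\omega_3+B)$ recovers $A,B\in\mathbb{Z}$. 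Finally I would check that $(1,0),(2,0),(3,1),(4,3)$ give $a^2-ab+b^2=1,4,7,13$, which are $\equiv1,4,2,3$ mod $5$, matching $p\bmod5$ for $p\equiv1,4,7,13$ mod $15$.

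For the second assertion, $(x-1)(Cx+D)$ takes at $\omega_3$ precisely the elements of $\mathfrak{p}$ (as $C\omega_3+D$ runs over all of $\mathbb{Z}[\omega_3]$), so $a+bx+5(x-1)(Cx+D)$ realizes the coset $(a+b\omega_3)+5\mathfrak{p}$; via $\mathbb{Z}[\omega_3]/5\mathfrak{p}\cong\mathbb{F}_{25}\times\mathbb{F}_3$ I now need a norm-$p$ element $\xi$ with $\xi\equiv a+b\omega_3$ mod $5$ and $\xi\equiv a+b$ mod $\mathfrak{p}$. By the previous paragraph (and its conjugate) there is a norm-$p$ element $\xi_1=\varepsilon\pi_0$ and a norm-$p$ element $\xi_2=\varepsilon'\overline{\pi_0}$ with the right residue mod $5$; both reduce mod $\mathfrak{p}$ into $\{\pm1\}$ (their squares are $p\equiv1$), and $\xi_1\equiv\xi_2$ mod $\mathfrak{p}$ would force $\varepsilon/\varepsilon'\equiv1$ mod $\mathfrak{p}$, i.e. (using Frobenius, $\overline{\pi_0}\equiv\pi_0^{5}$ mod $5$) would force $(\pi_0\bmod5)^{4}=\overline{\pi_0}/\pi_0\bmod5$ to lie in the unique order-$3$ subgroup of $\ker(N)\subset\mathbb{F}_{25}^{\times}$. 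Here the hypothesis enters: for $p\equiv7$ or $13$ mod $15$ the class $p\bmod5$ has order $4$ in $\mathbb{F}_5^{\times}$, so $(\pi_0\bmod5)^{6}$ has order $4$, forcing $\pi_0\bmod5$ to have order $8$ or $24$ and hence $(\pi_0\bmod5)^{4}$ to have order $2$ or $6$ — never $1$ or $3$. Thus $\xi_1,\xi_2$ take the two distinct values $\pm1$ mod $\mathfrak{p}$, one of which equals $a+b$ mod $\mathfrak{p}$ (note $3\nmid a+b$, since $(a,b)=(2,3),(3,-1)$ give $a+b=5,2$), while $a^2-ab+b^2=7,13\equiv2,3$ mod $5$ matches $p\bmod5$ for $p\equiv7,13$ mod $15$; rewriting $\xi-(a+b\omega_3)\in5\mathfrak{p}=5(\omega_3-1)\mathbb{Z}[\omega_3]$ as $5(\omega_3-1)(C\omega_3+D)$ then recovers $C,D\in\mathbb{Z}$.

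The step I expect to be the main obstacle is this last simultaneous control of the residue of a norm-$p$ generator modulo $5$ and modulo $\mathfrak{p}$. The residue mod $5$ costs nothing, because the six units of $\mathbb{Z}[\omega_3]$ already sweep out the full norm-$1$ subgroup of $\mathbb{F}_{25}^{\times}$; but the sign mod $\mathfrak{p}$ is governed by how the two prime divisors $\pi_0,\overline{\pi_0}$ of $p$ sit relative to one another mod $5$, and that quantity is exactly the order of $p$ in $\mathbb{F}_5^{\times}$ — equivalently the class of $p$ mod $15$ — which is why the second assertion is available precisely for $p\equiv7$ or $13$ mod $15$ and would degenerate for $p\equiv1,4$ mod $15$.
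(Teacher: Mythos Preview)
Your proof is correct and takes a genuinely different route from the paper's.

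The paper argues by explicit normalization: starting from any $\pi_0=\alpha+\beta\omega_3$ of norm $p$, it reduces $(\alpha,\beta)$ mod $5$ and then repeatedly applies the three norm-preserving moves $(a,b)\mapsto(b,a)$, $(a,b)\mapsto(-a,-b)$, $(a,b)\mapsto(a-b,-b)$ until one of the four canonical pairs is reached. For the second assertion it rewrites $Ax+B$ modulo $\Phi_3(x)$ as $(x-1)\cdot(\text{something})+r$ with $r\in\{0,\pm1\}$, rules out $r=-1$ by a $3$-divisibility check on the norm, and when $r=1$ performs an explicit conjugation/multiplication trick (different for $p\equiv7$ and $p\equiv13$) to absorb the stray constant back into a multiple of $(x-1)$.

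Your argument replaces all of this by structure: the key observation that the six units of $\mathbb Z[\omega_3]$ inject onto the full norm-$1$ kernel in $\mathbb F_{25}^{\times}$ immediately gives the first assertion, and for the second you trade the paper's case split for a clean order computation in $\mathbb F_{25}^{\times}$, using both $\pi_0$ and $\overline{\pi_0}$ and the Frobenius identity $\overline{\pi_0}\equiv\pi_0^{5}\pmod 5$ to force the two candidates to have opposite signs mod $\mathfrak p$ exactly when $p\bmod 5$ has order $4$. This is more conceptual and makes transparent \emph{why} the second claim is special to $p\equiv7,13\pmod{15}$, whereas the paper's proof is more algorithmic (one can read off the actual $A,B,C,D$ from the reductions). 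Both are short; yours would generalize more readily to other small cyclotomic rings, while the paper's stays closer to the concrete polynomial identities used in the surrounding constructions.
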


\begin{proof}[Proof of  Lemma \ref{L7mod15}]
Since $p\equiv 1$ mod 3, we know that $p$ splits in $\mathbb Z[\omega_3]$ and 
$$p=N_3(\alpha+\beta x )=\alpha^2+\beta^2-\alpha\beta,$$
for some integers $\alpha$, $\beta$, and we can write $\alpha+\beta x=a+bx + 5(Ax+B)$ with $a,b\in \{0,\pm 1,\pm 2\}$,
not both zero since $5^2\nmid p$.
Observe that the norm remains unchanged if we switch the positions of $\alpha$ and $\beta$ or multiply by $-1$;
that is we can replace $(a,b)$ by $(b,a)$ or $(-a,-b)$ on  replacing $Ax+B$ by $Bx+A$ or $-Ax-B$. 
Hence we can assume that $a=1$ or 2 and $|b|\leq a$.

 If $b=0$ that gives us $(2,0)$ or $(1,0)$.
We can also replace $\alpha+\beta \omega_3$ by its conjugate $\alpha+\beta \omega_3^2$, and hence  $(a,b)$ by $(a-b,-b),$
on replacing $Ax+B$ by $-Ax+(B-A)$. Hence $(a,a)$ reduces to $(0,-a)$ and thence to  $(1,0)$ or $(2,0)$. Similarly $(1,-1)$
reduces  to $(2,1)$ and $(2,-2) \mapsto (4,2) \mapsto (-1,2) \mapsto (2,-1)$.
This just leaves $(2,1)\mapsto (-3,-4)\mapsto (4,3)$ or $(2,-1)\mapsto (-3,-1)\mapsto (3,1)$. Since $p\equiv a^2-ab+b^2$ mod 5 these four types $(a,b)=(1,0),(2,0),(3,1),(4,3)$ correspond to the four possibilities mod $15$.

Notice that for $p=7$ or 13 mod 15 we could alternatively take $(2,-1)\mapsto (2,-2)\mapsto (2,3)$ or $(2,1)\mapsto (-3,1)\mapsto (3,-1)$ and write $p=N_3(c+dx+5(Ax+B))$ with $(c,d)=(2,3)$ or $(3,-1)$ respectively.

Now if $A+B=3m+r$, $r=0,\pm 1$  we have 
$$ Ax+B=A(x-1)-m(x-1)(x+2) +r \text{ mod }\Phi_3(x), $$ 
and we can write $p=N_3(c+dx+5r + (x-1)(Cx+D)). $
If $r=0$ we are done. We can also rule out $r=-1$ as $3\mid N_3(-3+3x), N_3(-2-x)$.
If $p=7$ mod 15 and $r=1$ we write 
\begin{align*} p & =N_3( 2+3x+ 5x^2+ 5(x-1)(Cx+D)-5(x^2-1))\\
& =N_3(-3-2x+5(x-1)(C_1x+D_1))=N_3(-x(-3-2x^2+5(x^2-1)(C_1x^2+D_1)))\\
& =N_3(2+3x+5(x-1)(C_2x+D_2)).
\end{align*}
If $p=13$ mod 15 and $r=1$
\begin{align*} p & =N_3( 3+4x+ 5(x-1)(Cx+D-1))= N_3(-x^2(3+4x^2+5(x^2-1)(Cx^2+D-1))\\ & =N_3(3-x+5(x-1)(C_1x+D_1)).
\end{align*}

\end{proof}

With the $A,B$ and $C,D$  from Lemma  \ref{L7mod15} we have

\begin{theorem}\label{7mod15}
If $p\equiv 7$ mod 15, then
\begin{align*} M_{15}\left(1-x+x^3\Phi_3(x^3)+(1-x)\Phi_5(x)\Phi_{15}(x)(Ax+B)\right)=3^2p,\\
 M_{15}\left(1-x^2+x^4+x^9+x^{10}+x^{13}+x^{14}+(x-1)\Phi_5(x)\Phi_{15}(x)(Cx+D)\right)=5^2p.
\end{align*}
If $p\equiv 13$ mod 15
\begin{align*} M_{15}\left(1-x^5-x^{11}+x^{12} +x^3\Phi_3(x^3)+(1-x)\Phi_5(x)\Phi_{15}(x)(Ax+B)\right)& =3^2p,\\
M_{15}\left(1+x^3+x^6+x^9+x^{14}+(x-1)\Phi_5(x)\Phi_{15}(x)(Cx+D)\right) & =5^2p.
\end{align*}
\end{theorem}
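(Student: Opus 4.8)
The plan is to prove each of the four displayed identities by direct evaluation, using the decomposition $M_{15}(F)=N_1(F)\,N_3(F)\,N_5(F)\,N_{15}(F)$, where $N_d(F)=\prod_{\gcd(j,d)=1}F(\omega_d^{\,j})$ is the norm of $F(\omega_d)$ down to $\mathbb{Q}$. Each of the four polynomials has the shape $F_0(x)\pm(x-1)\Phi_5(x)\Phi_{15}(x)L(x)$ with $L$ linear and $F_0$ the ``bare'' part (the summand with coefficients in $\{0,\pm1\}$). The correction term vanishes at $x=1$, at the primitive fifth roots of unity (because of $\Phi_5$), and at the primitive fifteenth roots of unity (because of $\Phi_{15}$); hence it changes only the factor $N_3(F)$, while $N_1(F)=F_0(1)$, $N_5(F)=N_5(F_0)$ and $N_{15}(F)=N_{15}(F_0)$.

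First, the trivial factor: one-line evaluations give $F_0(1)=3$ for the two ``$=3^2p$'' identities and $F_0(1)=5$ for the two ``$=5^2p$'' identities. Next comes the heart of the argument, the computation of $N_3(F)$. Using $\Phi_5(\omega_3)=1+\omega_3=-\omega_3^2$ (a unit) and $\Phi_{15}(\omega_3)=-5\omega_3$, the correction evaluates at $\omega_3$ to $5(\omega_3-1)L(\omega_3)$ up to sign; reducing the bare part at $\omega_3$ via $\omega_3^{3k}=1$, I would then check by a short linear computation in $\mathbb{Z}[\omega_3]$ that $F(\omega_3)$ equals \emph{exactly} $(1-\omega_3)\bigl(a+b\omega_3+5(A\omega_3+B)\bigr)$ with $(a,b)=(3,1)$, resp.\ $(4,3)$, in the two $3^2p$ cases, and \emph{exactly} $a+b\omega_3+5(\omega_3-1)(C\omega_3+D)$ with $(a,b)=(2,3)$, resp.\ $(3,-1)$, in the two $5^2p$ cases. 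By Lemma \ref{L7mod15} the second factor (resp.\ the whole expression) has $3$-norm $p$; since $N_3(1-\omega_3)=3$, this yields $N_3(F)=3p$ for the first pair of identities and $N_3(F)=p$ for the second.

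It remains to pin down the constants $N_5(F_0)$ and $N_{15}(F_0)$, which do not involve $p$. For $N_5$, I would reduce $F_0$ modulo $x^5-1$ and then modulo $\Phi_5(x)$: for the two $3^2p$ polynomials the result is $-\omega_5(1+\omega_5)$ up to a power of $\omega_5$, so $N_5(F_0)=\Phi_5(-1)=1$; for the two $5^2p$ polynomials it is, as an element of $\mathbb{Z}[\omega_5]$, $-(1+\omega_5)(2\omega_5^2+2\omega_5+1)$, resp.\ $\omega_5^2(1-\omega_5)(1+\omega_5)$, with $N_5$ equal to $\Phi_5(-1)\cdot N_5(2x^2+2x+1)=5$, resp.\ $\Phi_5(1)\Phi_5(-1)=5$ --- and this factor $5$, together with $F_0(1)=5$, is what produces the $5^2$. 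For $N_{15}$, I would reduce $F_0$ modulo $\Phi_{15}(x)$ and verify the resulting element of $\mathbb{Z}[\omega_{15}]$ is a unit, i.e.\ $N_{15}(F_0)=1$; for the first polynomial this is transparent, since $F_0(\omega_{15})=-\omega_{15}(1+\omega_{15}^{11})$ has norm $\Phi_{15}(-1)=1$, and in each remaining case it is a finite check, $\Res(\Phi_{15},F_0)=\pm1$. Multiplying the four factors gives $M_{15}(F)=3\cdot 3p\cdot 1\cdot 1=3^2p$, resp.\ $5\cdot p\cdot 5\cdot 1=5^2p$; since $N_3,N_5,N_{15}$ are norms from totally imaginary fields they are nonnegative, and $F_0(1)>0$, so there is no sign ambiguity.

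The step I expect to be most delicate is the $N_3$ computation: it has to be carried out separately for each of the four residue/parity combinations, and in each one the identity in $\mathbb{Z}[\omega_3]$ matching $F(\omega_3)$ to the normal form of Lemma \ref{L7mod15} is elementary but sign-sensitive, so the bookkeeping is the real work. The secondary obstacle is the $N_{15}(F_0)=1$ verification: absent a uniform structural reason it amounts to a degree-$8$ resultant (equivalently, writing $F_0(\omega_{15})$ in terms of the unit generators $x-1,\,x+1,\,x^3+1$ of $\mathbb{Z}[\omega_{15}]$), one such computation per identity.
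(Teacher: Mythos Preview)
Your proposal is correct and follows essentially the same route as the paper: split $M_{15}(F)=N_1N_3N_5N_{15}$, note the correction term $\pm(x-1)\Phi_5\Phi_{15}L$ only affects $N_3$, evaluate $F(\omega_3)$ using $\Phi_5(\omega_3)\Phi_{15}(\omega_3)=5$, and match against the normal forms of Lemma~\ref{L7mod15} to get $N_3(F)=3p$ (first pair) or $N_3(F)=p$ (second pair). The only difference is packaging: the paper computes the product $N_1N_5N_{15}(F_0)$ in one stroke (asserting it equals $3$, resp.\ $5^2$), which sidesteps the separate $N_5$ and $N_{15}$ checks you flag as the ``secondary obstacle''; your factor-by-factor verification is slightly more work but equally valid.
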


\begin{proof} Denote the first and second polynomials by $F$ and $G$. Observing that 
$$ N_1N_5N_{15}(1-x+x^3\Phi_3(x^3))=N_1N_5N_{15}(1-x^5-x^{11}+x^{12} +x^3\Phi_3(x^3))=3, $$
plainly  $N_1N_5N_{15}(F)=3$. We have $\Phi_5(\omega_3)\Phi_{15}(\omega_3)=5$, 
$$ 1-\omega_3+3= (1-\omega_3)(3+\omega_3),\;\; 1-\omega_3^5-\omega_3^{11} +\omega_3^{12} +3=(1-\omega_3)(4+3\omega_{3}) $$
and hence $N_3(F)=N_3(1-x)N_3(a+bx+5(Ax+B))=3p$.

Similarly 
$$ N_1N_5N_{15}\left( 1-x^2+x^4+x^9+x^{10}+x^{13}+x^{14}\right)=N_1N_5N_{15}\left(1+x^3+x^6+x^9+x^{14} \right)=5^2$$
and $N_1N_5N_{15}(G)=5^2$. Since
$$ 1-\omega_3^2+\omega_3^4+\omega_3^9+\omega_3^{10}+\omega_3^{13}+\omega_3^{14}=2+3\omega_3,
1+\omega_3^3+\omega_3^6+\omega_3^9+\omega_3^{14}=3-\omega_3, $$
we get $N_3(G)=N_3(c+dx+5(x-1)(Cx+D))=p$.

\end{proof}
To show that we can obtain all the $3^2p$ and $5^2p$ with  $p\equiv 11$ mod 15 we need a similar $5$-norm representation lemma:

\begin{lemma}
If $p\equiv 11$ mod 15 then
$$ p=N_5( 3\pm (x-1)+3(x-1)g(x)) $$
for some $g(x)$ in $\mathbb Z [x]$, and
$$ M_{15}\left(  1+x^5+x^{10} \pm (x-1)+ (1+x^5+x^{10})(x-1)g(x)\right)=3^2p. $$

We can also write
$$ 5p=N_5( (x-1)(1+2x)+3(x-1)g_2(x)) $$
for some $g_2(x)$ in $\mathbb Z [x]$, and
$$M_{15}\left(    x^{13}(1+x+x^2+x^3)-x^7+x^{10}  +x^{11} + (1+x^5+x^{10})(1-x)g_2(x)\right)= 5^2p. $$
\end{lemma}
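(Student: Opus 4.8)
The lemma has two halves (the $3^2p$ assertion and the $5^2p$ assertion), and each halves in turn into (a) a congruence‑type norm representation in $\mathbb{Z}[\omega_5]$ and (b) a routine evaluation of $M_{15}$ as the product $N_1N_3N_5N_{15}$. The point of the factor $1+x^5+x^{10}$ is that it vanishes at every primitive cube root and every primitive fifteenth root of unity and equals $3$ at $x=1$ and at the primitive fifth roots of unity. Hence for $F(x)=(1+x^5+x^{10})\pm(x-1)+(1+x^5+x^{10})(x-1)g(x)$ one reads off at once $N_1(F)=F(1)=3$, $N_3(F)=N_3(\pm(x-1))=\Phi_3(1)=3$, $N_{15}(F)=N_{15}(\pm(x-1))=\Phi_{15}(1)=1$, while (replacing $x^{5j},x^{10j}$ by $1$ on the fifth roots) $N_5(F)=N_5\!\left(3\pm(x-1)+3(x-1)g(x)\right)$; so $M_{15}(F)=9\,N_5(F)$ and the first identity is exactly the claimed representation $p=N_5(3\pm(x-1)+3(x-1)g(x))$. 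For the second polynomial the same bookkeeping, after reducing modulo $x^{15}-1$, gives $N_1=5$, $N_3=1$ and $N_{15}=1$ — the value at a primitive fifteenth root simplifies, using $1+\omega_{15}^5+\omega_{15}^{10}=0$, to $-\omega_{15}^3(1+\omega_{15}+\cdots+\omega_{15}^6)$, a cyclotomic unit — while $N_5=N_5(x-1)\cdot N_5((1+2x)+3g_2(x))=5\,N_5((1+2x)+3g_2(x))$; thus $M_{15}=25\,N_5((1+2x)+3g_2(x))$, and it remains to produce $g_2$ with $N_5((1+2x)+3g_2(x))=p$, equivalently $5p=N_5((x-1)(1+2x)+3(x-1)g_2(x))$.

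Everything thus reduces to two norm representations in $\mathbb{Z}[\omega_5]$, and here I would use that $\mathbb{Z}[\omega_5]$ is a UFD, that $3$ is inert (its order mod $5$ is $4$) so $\mathbb{Z}[\omega_5]/3\cong\mathbb{F}_{81}$, that $(1-\omega_5)$ is the unique, totally ramified prime above $5$, and that every unit of $\mathbb{Z}[\omega_5]$ has $N_5=+1$ (the $5$-norm is always a positive integer, being a product of complex‑conjugate pairs). Since $p\equiv 11\pmod{15}$ gives $p\equiv1\pmod5$, $p$ splits completely and a prime divisor $(\pi)$ is principal with $N_5(\pi)=p$; and since $p\equiv2\pmod3$, the image $\bar\pi\in\mathbb{F}_{81}^{\times}$ lies in the unique coset $\mathcal{C}$ of the norm‑one subgroup with $N_{\mathbb{F}_{81}/\mathbb{F}_3}\equiv 2$. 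For the $5p$-representation I want a norm‑$p$ element $\equiv 1+2\omega_5\pmod3$: multiplying it by $(x-1)$ washes out the data mod $(1-\omega_5)$ and yields the stated shape with $3g_2(\omega_5)$ equal to the difference. Now $N_5(1+2x)=11\equiv2\pmod3$, so $1+2\bar\omega_5\in\mathcal{C}$; and the image of the unit group mod $3$ is exactly the norm‑one subgroup — the key fact being that $1+\omega_5$ has order $40$ in $\mathbb{F}_{81}^{\times}$, so $\langle 1+\bar\omega_5\rangle$ is the whole norm‑one subgroup and already contains the image $-\bar\omega_5$ of the torsion. Hence some $\pi'=u\pi$ has $\pi'\equiv1+2\omega_5\pmod3$, and $5p=N_5((x-1)\pi')=N_5((x-1)(1+2x)+3(x-1)g_2(x))$.

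The $3^2p$-representation is the genuinely delicate one, since now I need a norm‑$p$ element that is simultaneously $\equiv\pm(\omega_5-1)\pmod3$ and $\equiv3\pmod{1-\omega_5}$. Multiplying $\pi$ by the right power of the unit $1+\omega_5$ (which reduces to $2$, a primitive root mod $5$) fixes the class mod $(1-\omega_5)$ to be $3$, but it disturbs the class mod $3$; and the subgroup of units that is trivial mod $(1-\omega_5)$ only surjects onto the index‑two subgroup $\langle(1+\bar\omega_5)^2\rangle$ of the norm‑one subgroup of $\mathbb{F}_{81}^{\times}$, so a naive adjustment fails by a sign. The resolution, which is the crux, is that Galois conjugation $\omega_5\mapsto\omega_5^k$ acts trivially on the residue field $\mathbb{F}_5$ at the prime above $5$ but acts as a power of the Frobenius of $\mathbb{F}_{81}/\mathbb{F}_3$ at the prime $3$, and that this Frobenius interchanges the two cosets of $\langle(1+\bar\omega_5)^2\rangle$ lying inside the norm coset $\mathcal{C}$ containing $\pm(\bar\omega_5-1)$; replacing $\pi$ by a suitable conjugate therefore supplies exactly the missing sign. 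One then reaches an element $\equiv3\pm(\omega_5-1)\pmod{3(1-\omega_5)}$ of norm $p$, i.e. of the form $3\pm(\omega_5-1)+3(\omega_5-1)g(\omega_5)$, and feeding it through the $M_{15}$ computation of the first paragraph gives $3^2p$. In practice this reduction can be done concretely in the style of Lemma~\ref{L7mod15}: write $\pi$ in the $\mathbb{Z}$-basis $1,\omega_5,\omega_5^2,\omega_5^3$, reduce its coefficients mod $3$ into $\{0,\pm1\}$, and push the finitely many resulting coefficient vectors to canonical form using multiplication by $\pm\omega_5^j$, by $1+\omega_5$, and conjugation $\omega_5\mapsto\omega_5^2$; the vectors compatible with $p\equiv2\pmod3$ are precisely those equivalent to $\pm(\omega_5-1)$. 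The main obstacle, then, is exactly this unit‑and‑conjugacy reduction modulo $3(1-\omega_5)$ — in particular matching the residues mod $3$ and mod $(1-\omega_5)$ at once, which is where the Frobenius/conjugation argument is needed — whereas the four‑factor evaluations of $M_{15}$ are bookkeeping.
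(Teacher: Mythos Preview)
Your $M_{15}$ bookkeeping matches the paper's (and your simplification of the value at a primitive fifteenth root to $-\omega_{15}^3(\omega_{15}^7-1)/(\omega_{15}-1)$ is an explicit touch that the paper does not record). The real divergence is in how the two norm representations in $\mathbb{Z}[\omega_5]$ are obtained. The paper argues constructively and reuses its Type~1/Type~2 machinery from Lemma~\ref{goodbad}: starting from any $\pi$ of norm $p$, it multiplies by $\pm1,\pm(1+x)$ to force $\pi\equiv3\pmod{1-\omega_5}$, absorbs the resulting multiple of $5$ via $5=\prod_j(\omega_5^j-1)$, reduces the coefficient of $(x-1)$ modulo $3$ and $\Phi_5$ to one of the short forms $\pm x^jB(x)$, observes that every Type~2 $B$ has $N_5(B)\in\{5,11\}$ --- which would force $p\equiv1\pmod3$, a contradiction --- and then strips the surviving Type~1 $B$'s down to $B=1$ by the substitutions $x\mapsto x^3$ and multiplication by units of the shape $1+(x-1)h$. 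The $5p$ representation is then obtained essentially for free by multiplying the first by $(1-x)$. Your route is the abstract counterpart: you work in $(\mathbb{Z}[\omega_5]/3)^\times\times(\mathbb{Z}[\omega_5]/(1-\omega_5))^\times$, use that $1+\bar\omega_5$ has order $40$ in $\mathbb{F}_{81}^\times$ so the unit image is the full norm-one subgroup, identify the obstruction to simultaneous adjustment as an index-two ambiguity, and kill it with Galois conjugation (Frobenius at $3$ does interchange the two $\langle(1+\bar\omega_5)^2\rangle$-cosets inside the norm-$2$ class, while acting trivially on the residue field at the ramified prime). Both arguments are correct; the paper's is more elementary and dovetails with the rest of the paper, while yours explains structurally why $p\equiv2\pmod3$ is precisely the condition needed and would generalise more cleanly --- at the cost of the small computations (the order of $1+\bar\omega_5$ in $\mathbb{F}_{81}^\times$, and the image of the congruence-one units being exactly $\langle(1+\bar\omega_5)^2\rangle$) which you assert but should spell out.
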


\begin{proof} Since $p\equiv 1$ mod $5$ we know that $p$ splits completely in $\mathbb Z [\omega_5]$ and $p=N_5(F(x))$ for some $F\in \mathbb Z [x]$.
Replacing $F(x)$ by $\pm F(x),\pm (x+1)F(x)$  we can assume that $F(1)\equiv 3$ mod 5.  Hence we can write
\begin{align*} F(x) & =3+ 5m +(x-1)g_1(x) \\
& = 3+ (x-1)g_2(x),\;\; g_2(x)=g_1(x)+ m(x^2-1)(x^3-1)(x^4-1) \\
& =3+(x-1)h(x)+ 3(x-1)g_3(x),\;\; h(x)=\sum_{j=0}^3a_jx^j, \; a_j\in \{0,\pm 1\}. \end{align*}
Proceeding as in the proof of Lemma \ref{goodbad} we can reduce to $h(x)=\pm x^jB(x)$ with $B(x)$ of type \eqref{type1}
or type \eqref{type2}. But the type 2 have $N_5(B(x))=5$ or $11$, resulting in $N_5(F)\equiv 1$ mod 3.
So we can assume that
$$ p=N_5( 3\pm x^j(x-1)B(x)+3(x-1)g(x)), \;\; B(x)=1,(x+1),(x^2+1) \text{ or } x^2-x+1. $$
If $B(x)=x+1$ we can make the substitution $x\mapsto x^3$ to make the
$(x-1)B(x)=(x^2-1) \mapsto (x-1)$. Writing $(x^2-x+1)^{-1}=\prod_{j=2}^4 \left(x^{2j}-x^j+1\right)=1+(x-1)g_3(x)$ we can 
divide out  $B(x)=(x^2-x+1)$. If $B(x)=x^2+1$ then $x\mapsto x^3$ makes $(x-1)B(x) \mapsto (x^3-1)(x+1)=(x-1)B(x)$,
with $B(x)=(x+1)(x^2+x+1)$, but in this case $B(x)^{-1}=\prod_{j=2}^4 B(x^j),$ $B(x^j)=6+(x-1)g_1(x)=1+(x-1)g_2(x)$
and we can again divide by $B(x)$. This leaves $B(x)=1$ and we can divide out any  $x^j$ by multiplying through 
by $x^{4j}=1+(x-1)g_4(x)$.

For the $F$ given we have $N_1(F)=3$, $N_3N_{15}(F)=N_3N_{15}(\pm (x-1))=3$ while
$ N_5(F)=N_5(3\pm (x-1)+3(x-1)g(x))=p. $

We can write 
\begin{align*} 5p & =N_5((1-x)\left(3\pm (x-1) +3(x-1)g_1(x)\right)) \\ & =N_5( \pm (x-1) (1+2x)+3(x-1)\left((x-1)g_2(x)-1\mp x\right).
\end{align*}

For the $G(x)$ given we have
$$ N_1N_3N_{15}(G)=N_1N_3N_{15}( x^{13}(1+x+x^2+x^3)-x^7+x^{10}  +x^{11} )=5 $$
while
$$\omega_5^{13}(1+\omega_5+\omega_5^2+\omega_5^3)-\omega_5^7+\omega_5^{10}+\omega_5^{11}=(1-\omega_5)(1+2\omega_5)$$
and $N_5(G)=N_5\left((1-x)(1+2x)+3(1-x)g_2(x)\right)=5p$.

\end{proof}

\section{Primes in $\mathbb Z[\omega_{15}]$}\label{factorsinZ15}
Finally, to simplify the a statement of the Theorem, we need a lemma  to say how the primes split in $\mathbb Z[\omega_{15}]$:

\begin{lemma} The primes $p\neq 3,5$ factor in $\mathbb Z[\omega_{15}]$ as
\begin{align} \label{1}  p =1 \mod 15   \;\;  & \Rightarrow \;\;\; p=\mathscr{P}_1\cdots \mathscr{P}_8,\;\;\;  N_{15}(\mathscr{P}_i)=p,\\
p=4,11 \text{ or } 14  \mod 15  \;\; &\Rightarrow \;\;\; p=\mathscr{P}_1\cdots \mathscr{P}_4,\;\;\; N_{15}(\mathscr{P}_i)=p^2,\\
p=2,7,8 \text{ or } 13  \mod 15 \;\; &\Rightarrow \;\;\; p=\mathscr{P}_1\mathscr{P}_2,\;\;\; N_{15}(\mathscr{P}_i)=p^4.
\end{align}
For the remaining primes $5=u_1(1-\omega_5)^4$, $3=u_2(1-\omega_3)^2$ for some units $u_1,u_2$.
\end{lemma}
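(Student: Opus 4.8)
The plan is to reduce everything to the classical decomposition law for primes in cyclotomic fields together with the fact, already used in the paper, that $\mathbb Z[\omega_{15}]$ is a PID. First I would recall (see for example Washington \cite{Washington}) that for a rational prime $p\nmid n$ the prime $p$ is unramified in $\mathbb Q(\omega_{n})$ and splits into $g=\phi(n)/f$ distinct primes, each of residue degree $f$, where $f$ is the multiplicative order of $p$ in $(\mathbb Z/n\mathbb Z)^{*}$. Since $\mathbb Z[\omega_{15}]$ has unique factorisation, each of these prime ideals is generated by a prime element, and after absorbing units we may write $p=\mathscr{P}_{1}\cdots\mathscr{P}_{g}$ with each $\mathscr{P}_{i}$ prime. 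Here $N_{15}$ is exactly the field norm $N_{\mathbb Q(\omega_{15})/\mathbb Q}$, since $\deg\Phi_{15}$ equals the number of residues coprime to $15$; hence $N_{15}(\mathscr{P}_{i})=p^{f}$, the sign being automatically $+$ because $\mathbb Q(\omega_{15})$ is totally imaginary and the norm of a non-zero element is then a product of terms $\abs{\sigma(\mathscr{P}_{i})}^{2}>0$.

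Next I would specialise to $n=15$, where $\phi(15)=8$ and $(\mathbb Z/15\mathbb Z)^{*}\cong(\mathbb Z/3\mathbb Z)^{*}\times(\mathbb Z/5\mathbb Z)^{*}\cong\mathbb Z/2\mathbb Z\times\mathbb Z/4\mathbb Z$, and simply list the orders of the eight units: the residue $1$ has order $1$; the residues $4,11,14$ (those of order dividing $2$ apart from the identity) have order $2$; and $2,7,8,13$ have order $4$. Plugging $f=1,2,4$, so $g=8,4,2$, into the previous paragraph gives exactly the three asserted factorisations, with $N_{15}(\mathscr{P}_{i})$ equal to $p$, $p^{2}$, $p^{4}$ respectively.

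For the ramified primes $p=3,5$ I would use elementary cyclotomic identities, already valid in $\mathbb Z[\omega_{15}]$. From $\Phi_{3}(1)=3=(1-\omega_{3})(1-\omega_{3}^{2})$ together with $1-\omega_{3}^{2}=(1-\omega_{3})(1+\omega_{3})=-\omega_{3}^{2}(1-\omega_{3})$ one gets $3=-\omega_{3}^{2}(1-\omega_{3})^{2}$, so $u_{2}=-\omega_{3}^{2}$ works. Similarly $\Phi_{5}(1)=5=\prod_{j=1}^{4}(1-\omega_{5}^{j})$, and for $j=2,3,4$ the quotient $(1-\omega_{5}^{j})/(1-\omega_{5})=1+\omega_{5}+\cdots+\omega_{5}^{j-1}$ is a cyclotomic unit, so $5=u_{1}(1-\omega_{5})^{4}$ with $u_{1}=\prod_{j=2}^{4}\big((1-\omega_{5}^{j})/(1-\omega_{5})\big)$.

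I do not expect a genuine obstacle: the content is the standard splitting law, and the only things to get right are the combinatorics of the orders modulo $15$ and the remark that unique factorisation in $\mathbb Z[\omega_{15}]$ converts the prime-ideal factorisation into an equality of elements. If one wanted to be thorough one could also check that $(1-\omega_{3})$ and $(1-\omega_{5})$ stay prime in $\mathbb Z[\omega_{15}]$ --- $3$ has ramification index $2$ and residue degree $\mathrm{ord}_{5}(3)=4$, and $5$ has ramification index $4$ and residue degree $\mathrm{ord}_{3}(5)=2$, so $g=1$ in each case --- but this is not needed for the identities as stated.
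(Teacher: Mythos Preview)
Your proof is correct and follows essentially the same route as the paper: invoke the standard cyclotomic splitting law (Washington, Theorem~2.13), compute the multiplicative orders modulo~$15$, and use class number one to pass from prime ideals to prime elements. You add some pleasant extras the paper omits---explicit units $u_1,u_2$, the positivity-of-norm remark, and the optional check that $g=1$ for the ramified primes---but the core argument is identical.
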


\begin{proof} Recall (eg Washington \cite[Theorem 2.13]{Washington}) that  $p\nmid n$ splits into $\phi(n)/f$  distinct primes in $\mathbb Q (\omega_n)$ each of which have residue class degree $f$, where $f$ is the smallest positive integer with $p^f\equiv 1$ mod $n$. When $n=15$ plainly $f=1$ if  $p\equiv 1$ mod 15, $f=2$ if $p\equiv 4,11$ or  $14$ mod 15 and $f=4$ if $p\equiv  2,7,8$ or 13 mod 15. Similarly $3$ and $5$ stay prime in $\mathbb Z[\omega_5]$ and $\mathbb Z[\omega_3]$ but ramify completely on adding $\omega_3$ or $\omega_5$. Since $\mathbb Q(\omega_{15})$ has class number one we can replace prime ideals with prime elements.
\end{proof}

Hence if $k$, $\gcd(k,15)=1$,  is a 15-norm then it consists of products of $p$ with $p\equiv 1$ mod $15$,  $p^2$ with  $p\equiv 4,11$ or 13 mod 15, and $p^4$ with $p\equiv \pm 2$ mod 5. A good 15-norm must be divisible by at least one of these $p$ or $p^2$ or $p^4$ which is  15-norm good.
For the $p\equiv 1$ mod 15 it is hard to predict which $p$ are good or bad, and for $p\equiv 7,11$ or 13 we can otherwise achieve all multiples $p$. For the remaining $p^2$, $p\equiv 4$ or 14 mod 15 and $p^4$ with $p\equiv 2$ or 8  mod   15
we can determine this:

\begin{lemma} \label{p^4}  $p^4$ is 15-norm good if $p\equiv \pm 2$ mod $5$.

\end{lemma}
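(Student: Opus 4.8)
The plan is to compute $N_{15}(1+x+x^2+x^3+x^4)$ — equivalently the $15$-norm of the cyclotomic-like element $\Phi_5(\omega_{15})$, or rather a small element whose norm is $p^4$ — and show directly that it falls into the Type 1 (good) case of Lemma~\ref{goodbad}. More precisely, since $p\equiv \pm2$ mod $5$ means $p$ has residue degree $4$ in $\mathbb{Z}[\omega_{15}]$ with $p=\mathscr{P}_1\mathscr{P}_2$ and $N_{15}(\mathscr{P}_i)=p^4$, it suffices to exhibit a single $F\in\mathbb{Z}[x]$ with $N_{15}(F)=p^4$ (or $N_{15}(F)=p^4 m$ with $m$ a product of good prime powers, but the cleanest route is to hit $p^4$ exactly) that manifestly has a Type~1 decomposition, i.e. that is good; then since $p^4$ is a norm of a prime-power $\mathscr{P}_i^{?}$ the goodness of the integer $p^4$ is forced. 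The key observation I would exploit is that $p\equiv\pm2$ mod $5$ forces $p$ to split in the quadratic subfield $\mathbb{Q}(\sqrt{5})$ (as $\bigl(\tfrac{5}{p}\bigr)=+1$ exactly when $p\equiv\pm1$ mod $5$, so actually $p$ is \emph{inert} in $\mathbb{Q}(\sqrt5)$ when $p\equiv\pm2$) — let me instead track the subfield $\mathbb{Q}(\sqrt{-3})$: $p\equiv\pm2$ mod $5$ says nothing there, so the real content is the factorization pattern over $\mathbb{Z}[\omega_5]$, where $p$ is inert, giving $p^4=N_{\mathbb{Q}(\omega_5)/\mathbb{Q}}(p)$, and over $\mathbb{Z}[\omega_3]$, where $p$ may split or stay inert.

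First I would pin down a concrete element of $\mathbb{Z}[\omega_{15}]$ of norm $p^4$. The natural candidate is the image of $p$ itself under a suitable twist, but $N_{15}(p)=p^8$, so instead I would take $\mathscr{P}$ a prime of $\mathbb{Z}[\omega_{15}]$ above $p$ and write it, via the division algorithm against $x^{15}-1$, in the shape $F(x)=(x^5-1)\pm x^j\Phi_3(x)B(x)+(x-1)\Phi_3(x)\Phi_5(x)g(x)$ guaranteed by Lemma~\ref{goodbad}, and the whole point is to rule out $B(x)=(x-1)$. Following the mechanism in the proof of the Theorem of Section~\ref{thm}: a Type~2 decomposition has $N_5(B(x))\in\{5,11\}$, hence $N_5(F)\equiv 5^i\cdot 2$ or similar mod $3$, and — crucially — $N_3(F)\equiv 3^{i+1}$ mod $5$. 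Now if $N_{15}(F)=p^4$ with $p\equiv\pm2$ mod $5$, then $N_3(F)\cdot N_5(F)\cdot N_{15}(F)$ relations together with $N_{15}(F)\equiv N_3(F)^4$ mod $5$ and the known residue of $p^4$ mod $5$ (namely $p^4\equiv(\pm2)^4=16\equiv1$ mod $5$) should contradict the parity constraint that Type~2 imposes on $i$. That is the same parity bookkeeping already used to show $1$ is bad and that good/bad multiply like a $\mathbb{Z}/2$ grading.

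The main obstacle I expect is making the congruence arithmetic airtight: I need the precise value of $N_3(F)$ and $N_5(F)$ modulo the ``other'' prime for an $F$ whose total $15$-norm is exactly $p^4$, and I must be careful that $F$ need not have $N_3(F)=N_5(F)=1$ — a priori $N_{15}(F)=p^4$ only constrains the \emph{product} $N_1 N_3 N_5 N_{15}$ appropriately, and $p$ could show up in $N_3(F)$ or $N_5(F)$ too. The clean fix is to choose $F$ to be (an associate of) a single prime $\mathscr{P}\mid p$ in $\mathbb{Z}[\omega_{15}]$, so that $N_d(F)$ for $d\mid 15$ is controlled by how $\mathscr{P}$ sits over the subfields: since $p$ is inert in $\mathbb{Z}[\omega_5]$, $N_5(F)=N_{\mathbb{Q}(\omega_5)/\mathbb{Q}}(\text{unit})=\pm1$, and likewise the $\mathbb{Z}[\omega_3]$-norm of $F(\omega_3)$ is $\pm1$ (as $\mathscr{P}$ lies over a prime of $\mathbb{Z}[\omega_3]$ that does not ramify and whose norm to $\mathbb{Q}$ is coprime to $p$ only if... ) — here I'd have to argue $N_3(F)$ is a power of $p$ or $1$ and similarly $N_{15}(F)=p^4$ absorbs all of it, i.e. $N_1(F)N_3(F)N_5(F)=\pm1$. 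Granting that, the residues $N_3(F)\equiv\pm1$ mod $5$ and $N_5(F)\equiv\pm1$ mod $3$ collide with the Type~2 signature $N_3(F)\equiv 3^{i+1}$ mod $5$ (which is $\pm2$ or $\pm3$, never $\pm1$) for \emph{every} $i$, so $F$ cannot be bad, hence $p^4=N_{15}(F)$ is $15$-norm good. I would also sanity-check the claim against the explicit good lists for small $p\equiv2,8$ mod $15$ in the introduction ($2,8$ mod $15$ primes like $17,23,\dots$) to make sure the parity computation has the right sign.
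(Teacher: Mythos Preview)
Your proposal has a genuine gap at the decisive step. You claim that under a Type~2 (bad) decomposition one has $N_3(F)\equiv 3^{i+1}\pmod 5$ and that this value is ``$\pm2$ or $\pm3$, never $\pm1$''. That is false: the powers $3^{i+1}\pmod 5$ cycle through $3,4,2,1$, so for $i=1$ or $i=3$ (and $0\le i\le 3$ is the allowed range in the full decomposition) one gets $N_3(F)\equiv \pm1\pmod 5$. Combined with the companion congruence $N_5(F)\equiv(-1)^{i+1}\pmod 3$, the choice $i=3$ is perfectly compatible with $N_3(F)=N_5(F)=1$. So even granting your (unproven) claim that one can choose a polynomial representative $F$ of a prime $\mathscr P\mid p$ with $N_1(F)=N_3(F)=N_5(F)=\pm1$, no contradiction follows. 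The parity bookkeeping from Section~\ref{thm} is tuned to the situation $3\parallel F(1)$ or $5\parallel F(1)$, which forces $i$ into the useful parity class; you have no such leverage here.

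There is also a structural confusion in your setup: the congruences $N_3(F)\equiv 3^{i+1}\pmod 5$ and $N_5(F)\equiv(-1)^{i+1}\pmod 3$ apply to a \emph{specific polynomial} $F$ with the full $(x-1)^i$ decomposition valid at all $15$th roots, whereas the good/bad dichotomy of Lemma~\ref{goodbad} is defined only up to a unit at the primitive $15$th roots. You cannot simultaneously pick $F$ to make $N_3(F),N_5(F)$ what you want \emph{and} read off the Type from that same $F$ without redoing the reduction; the integer~CRT among $\Phi_1,\Phi_3,\Phi_5,\Phi_{15}$ has compatibility constraints that you never address.

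The paper's argument is quite different. Assuming a bad representation $F(x)=(x^5-1)\pm x^j(x^3-1)+(x^3-1)(x^5-1)g(x)$, it forms $G(x)=F(x\omega_3)F(x\omega_3^2)\in\mathbb Z[x]$ with $N_5(G)=p^4$, then passes to the real subfield via $H(x)=G(x)G(x^{-1})\in\mathbb Z\bigl[\tfrac12(1+\sqrt5)\bigr]$. The key point, which you correctly identified but did not exploit, is that $p\equiv\pm2\pmod 5$ makes $p$ inert in $\mathbb Z[\omega_5]$, so $H(\omega_5)$ and $H(\omega_5^2)$ are each a positive unit times $p^2$; writing that unit as $\bigl(\tfrac12(1\pm\sqrt5)\bigr)^{2k}$ and taking the trace $H(\omega_5)+H(\omega_5^2)=a_kp^2=33+15m$ yields simultaneous congruences on $a_k$ modulo $3$ and $5$ that the Lucas-type recursion for $a_k$ never satisfies. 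That descent to $\mathbb Q(\sqrt5)$ is the missing idea.
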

Of course if $p\equiv \pm 1$ mod 5 then $p^2$ is a 15-norm and its square is 15-norm bad.

\begin{proof} Since $p\equiv \pm 2$ mod 5 we know that $p$ remains irreducible in $\mathbb Z[\omega_5]$. 

Suppose that $p^4=N_{15}(F(x))$ has a bad representation
$$ F(x)=(x^5-1)\pm x^j (x^3-1) + (x^3-1)(x^5-1) g(x). $$
Writing $G(x)=F(x\omega_3)F(x\omega_3^2)$ we have $G(x)\in \mathbb Z[x]$ with $p^4=N_5(G(x))$.
Hence for $x$ a primitive 5th root of unity we have $H(x)=G(x)G(x^{-1})=|F(x)|^2$  in $\mathbb Z\left[ \frac{1}{2}(1+\sqrt{5})\right],$
the integers in the  real subfield of the 5th roots of unity and 
$$ H(x)=u_1 p^2,   \;\;\;   H(x^2) = u_2 p^2, \;\;\; u_1= \left(\frac{1}{2}(1+\sqrt{5})\right)^{2k},\;\;  u_2= \left(\frac{1}{2}(1-\sqrt{5})\right)^{2k},$$
for some $k$ in $\mathbb Z$ (notice that $u_1$ and its conjugate $u_2$ under $x\rightarrow x^2$, $\sqrt{5}\rightarrow -\sqrt{5}$, must both be positive and so must be an even power $2k$ of the fundamental unit). 
\begin{align*} G(x)  = &\left( (\omega_3^2-1) \pm (x\omega_3)^j(x^3-1) +(x^3-1)(\omega_3^2-1)g(x\omega_3)\right)\\
 &\;\;\;  \left( (\omega_3-1) \pm (x\omega_3^2)^j(x^3-1) +(x^3-1)(\omega_3-1)g(x\omega_3^2)\right)\\
  =   & 3 + x^{2j} (x^3-1)^2 + 3(x^3-1)t_1(x), 
\end{align*}
and
$$ H(x)=G(x)G(x^{-1})= 9 +\frac{5}{2}(3+\sqrt{5}) + 3\sqrt{5}\: t_2\left( \frac{1}{2}(1+\sqrt{5})\right). $$
Writing
$ \left(\frac{3+\sqrt{5}}{2}\right)^k=\frac{1}{2}(a_k+b_k\sqrt{5})$ we have
$ a_kp^2 = H(x)+H(x^2) = 33+15m $
and, since $p^2\equiv -1$ mod $5$, we must have  $a_k\equiv 0$ mod 3 and $a_k\equiv 2$ mod $5$. But it is readily 
checked that the $(a_k \text{ mod }3, a_k \text{ mod } 5)$ cycle through the values $(0,-2),(1,2),(0,-2),(2,2)$ never $(0,2)$.
Hence $p^4$ must have a good representation.
\end{proof}

\begin{lemma} If $p\equiv 4$ mod 15 then $p^2$ is 15-norm good.

If $p\equiv 14$ mod 15 then $p^2$ is 15-norm bad. 

\end{lemma}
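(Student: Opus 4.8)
The plan is to pin down the good/bad status of a prime $\mathscr P$ of $\mathbb Z[\omega_{15}]$ lying over $p$: by the previous lemmas there are four such primes, all Galois conjugate (hence all good or all bad), each with $N_{15}(\mathscr P)=p^2$, so it is legitimate to speak of ``$p^2$ being $15$-norm good or bad.'' Following the method of Lemma \ref{p^4}, I would assume $\mathscr P$ had the type one wishes to exclude, pick a polynomial $F$ of that shape with $N_{15}(F)=p^2$ (available by Lemma \ref{goodbad}), push the norm down to a subfield with a manageable unit group, and extract a contradiction from congruences. The point of the case split is that $p\equiv4$ and $p\equiv14\pmod{15}$ are indistinguishable modulo $5$ (both are $\equiv-1$), so the decisive arithmetic takes place modulo $3$, where they differ.

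\emph{Case $p\equiv14\pmod{15}$ (so $p\equiv2\pmod3$): $p^2$ is bad.} Suppose instead $\mathscr P$ is good, so there is an $F(x)=(x^5-1)\pm x^j\Phi_3(x)+(x-1)\Phi_3(x)\Phi_5(x)g(x)$ with $N_{15}(F)=p^2$; evaluating on the primitive fifth roots of unity gives $N_5(F)=1$ outright. Since $p\equiv2\pmod3$, $p$ is inert in $\mathbb Z[\omega_3]$, so $\mathrm{Norm}_{\mathbb Q(\omega_{15})/\mathbb Q(\omega_3)}(\mathscr P)=(p)$; writing $\tilde G(t)=\prod_{\ell=1}^4F(t\omega_5^{\ell})\in\mathbb Z[t]$ and noting $\mathrm{Norm}_{\mathbb Q(\omega_{15})/\mathbb Q(\omega_3)}(F(\omega_{15}))=\tilde G(\omega_3^2)$, we get $\tilde G(1)=N_5(F)=1$ while $\tilde G(\omega_3)=\zeta p$ for a root of unity $\zeta\in\{\pm1,\pm\omega_3,\pm\omega_3^2\}$. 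These two facts already force the coefficients of $\tilde G(t)\bmod(t^3-1)$ to be $\{(1+p)/3,(1+p)/3,(1-2p)/3\}$ (up to order). On the other hand one computes $\tilde G(\omega_3)$ directly from $F(t\omega_5^{\ell})=(t^5-1)\bigl(1+\Phi_3(t\omega_5^{\ell})g(t\omega_5^{\ell})\bigr)\pm(t\omega_5^{\ell})^j\Phi_3(t\omega_5^{\ell})$, using $\prod_\ell\Phi_3(t\omega_5^{\ell})=\Phi_{15}(t)$ and $(t^5-1)^2\equiv(\omega_3^2-1)^2\equiv0\pmod3$ at $t=\omega_3$; matching this reduction (it has to be carried out modulo $9$, since modulo $3$ the unit $\zeta$ absorbs everything) against the forced coefficients yields the contradiction.

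\emph{Case $p\equiv4\pmod{15}$ (so $p\equiv1\pmod3$): $p^2$ is good.} Here $p$ splits in $\mathbb Z[\omega_3]$, and by Lemma \ref{L7mod15} we may write $p=N_3\bigl(2+5(Ax+B)\bigr)$, hence $p^2=N_3\bigl((2+5(Ax+B))^2\bigr)=N_3\bigl(1+5(A_1x+B_1)\bigr)$ (as $p^2\equiv1\pmod{15}$). The cleanest route is to construct an explicit good-form $F(x)=(x^5-1)\pm x^j\Phi_3(x)+(x-1)\Phi_3(x)\Phi_5(x)g(x)$ with $N_{15}(F)=p^2$, in the style of Theorem \ref{7mod15} and Lemma \ref{goodachieve}: varying $g$ alters only the values of $F$ at the primitive fifteenth roots (leaving $N_1(F)=\pm3$, $N_3(F)=3$, $N_5(F)=1$), and $\mathrm{Norm}_{\mathbb Q(\omega_{15})/\mathbb Q(\omega_3)}(F(\omega_{15}))=\prod_{\ell=1}^4F(\omega_3^2\omega_5^{\ell})$ is to be matched, for a suitable torsion unit $\zeta$, against $\zeta\bigl(1+5(A_1\omega_3+B_1)\bigr)$; this amounts to a finite linear system for the coefficients of $g$. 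Once such an $F$ exists, $\mathscr P=(F(\omega_{15}))$ is good, and by Lemma \ref{goodbad} it cannot also be bad, so $p^2$ is $15$-norm good and $3^2p^2,5^2p^2\in S_{15}$.

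The hard part throughout is the modulo-$3$ bookkeeping. In Lemma \ref{p^4} the factor ``$3$'' was carried by the very clean identity $F(x\omega_3)F(x\omega_3^2)=3+x^{2j}(x^3-1)^2+3(x^3-1)t_1(x)$, because the sensitive part of the \emph{bad} form, $x^j(x^3-1)$, is invariant under $x\mapsto x\omega_3$. For the \emph{good} form the sensitive part is $x^j\Phi_3(x)$ with $\Phi_3(x\omega_3)\ne\Phi_3(x)$; one has instead $\Phi_3(x\omega_3)\Phi_3(x\omega_3^2)=(x^3-1)(x-1)$, so after pushing down the cross term picks up only a factor $(1-\omega_3)$ rather than $3$, and keeping track of which residue modulo $3$ or $9$ survives — which is precisely what separates $p\equiv4$ from $p\equiv14$ — is where the genuine computation lies, as is, in the $p\equiv4$ case, verifying that the linear system for $g$ is solvable.
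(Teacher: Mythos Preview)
Your route is genuinely different from the paper's, and in both cases the step you leave as ``the genuine computation'' is not just bookkeeping --- as written it does not go through.

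\textbf{The $p\equiv14$ case.} You push down to $\mathbb Q(\omega_3)$ via $\tilde G(t)=\prod_{\ell=1}^4F(t\omega_5^\ell)$ and correctly pin $\tilde G(\omega_3)\in\{-p,-p\omega_3,-p\omega_3^2\}$. But the promised contradiction modulo $9$ cannot be read off, because the unknown $g$ survives. Writing $F(\omega_3\omega_5^\ell)=(\omega_3^2-1)A_\ell\pm(\omega_3\omega_5^\ell)^j\Phi_3(\omega_3\omega_5^\ell)$ with $A_\ell=1+\Phi_3(\omega_3\omega_5^\ell)g(\omega_3\omega_5^\ell)$, the terms with one factor of $(\omega_3^2-1)$ already contribute $(\omega_3^2-1)\sum_m A_m\cdot(\cdots)$ to $\tilde G(\omega_3)$, and since $(\omega_3^2-1)^2=-3\omega_3^2$ this lands in the class modulo $3$ but \emph{not} modulo $9$. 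So $\tilde G(\omega_3)\bmod 9$ genuinely depends on $g$, and there is nothing to match against the forced coefficients. The paper avoids this by norming the other way, through $\mathbb Q(\omega_5)$: with $G(x)=F(x\omega_3)F(x\omega_3^2)$ the $g$–term carries a factor $(x^3-1)$, so it contributes only via $3(x^3-1)t(x)$; passing to $H(x)=G(x)G(x^{-1})$ in $\mathbb Z\!\left[\tfrac{1+\sqrt5}{2}\right]$ and using that $p$ splits there (since $p\equiv4\bmod5$) reduces everything to a congruence on the sequence $(a_k,b_k)$ coming from powers of the fundamental unit, and the contradiction is a finite cycle check.

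\textbf{The $p\equiv4$ case.} Your constructive plan is to choose $g$ so that $\mathrm{Norm}_{\mathbb Q(\omega_{15})/\mathbb Q(\omega_3)}\!\bigl(F(\omega_{15})\bigr)=\zeta\bigl(1+5(A_1\omega_3+B_1)\bigr)$. But this norm is a degree-$4$ polynomial in the coefficients of $g$, not a linear system; there is no reason the equation is solvable over $\mathbb Z$, and you give none. (If instead you meant to match $F(\omega_{15})$ itself against a generator of $\mathscr P$, that \emph{is} affine in $g$, but then you must produce an explicit generator of $\mathscr P$ lying in the coset $(\omega_{15}^5-1)\pm\omega_{15}^j\Phi_3(\omega_{15})+\bigl((\omega_{15}-1)\Phi_3(\omega_{15})\Phi_5(\omega_{15})\bigr)\mathbb Z[\omega_{15}]$; since $(\Phi_3(\omega_{15})\Phi_5(\omega_{15}))$ is the product of the primes above $5$ and $3$, this is a nontrivial congruence condition you have not addressed.) The paper instead assumes a bad representation and runs the same $G,H,\sqrt5$ machinery as above to reach a contradiction.

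In short: the paper treats both residues uniformly by descending to $\mathbb Q(\sqrt5)$, where the unit group is rank one and $p$ splits, and the good/bad dichotomy becomes a parity obstruction in the $(a_k\bmod3,\,a_k\bmod5)$ or $(a_k\bmod5,\,b_k\bmod3)$ cycle. Your $\mathbb Q(\omega_3)$ descent loses control of $g$, and your constructive attempt mislabels a quartic norm equation as linear.
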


\begin{proof} We proceed as in the proof Lemma \ref{p^4}, except that when $p\equiv 4$ mod 5 we know that $p$ factors in the 
real subfield of the 5th roots of unity $p=\alpha^2-5\beta^2=\left(\alpha+\beta\sqrt{5}\right)\left(\alpha-\beta\sqrt{5}\right)$
(plainly $x^2+x-1$ factors mod $p$ since $\left(\frac{5}{p}\right)=\left(\frac{p}{5}\right)=1$), with
$\alpha \pm \beta \sqrt{5}$ remaining prime in $\mathbb Z[\omega_5]$ (fixed by $x\mapsto x^{-1}$ and interchanged by $x\mapsto x^2$).  Hence this time
$$ H(x)=\frac{1}{2}(a_k+b_k\sqrt{5})\left(\alpha+\beta \sqrt{5}\right)^2,\;\;\;  H(x^2)=\frac{1}{2}(a_k-b_k\sqrt{5})\left(\alpha-\beta \sqrt{5}\right)^2,$$
giving
$$ H(x)+H(x^2)=  (\alpha^2+5\beta^2) a_k + 10 \alpha\beta  \;b_k.     $$
Suppose that $p\equiv 4$ mod 15 and that we have a bad representation  then as before 
$$ H(x)+H(x^2) =33 +15m. $$
Since $p\equiv 1$ mod $3$ we know that $3\mid \alpha \beta$ and  $\alpha^2\equiv p\equiv -1$ mod 5.
Hence $-a_k\equiv -2$ mod $5$ and $0\equiv \pm a_k $ mod $3$, But as before $(a_k \text{ mod } 3, a_k \text{ mod } 5)$ is not $(2,0)$.
So the representation for $p^2$ must be good.

Suppose that $p\equiv 14$ mod 15 and that $p^2$ has a good representation:
$$ F(x)=(x^5-1)(x-1)\pm x^j (x^3-1) + (x^3-1)(x^5-1) g(x). $$
Then
$$ G(x)=3(1+x+x^2)+x^{2j}(x^3-1)^2 +3(x^3-1)t(x) $$
and
$$ H(x) = \frac{9}{2}(3+\sqrt{5})+\frac{5}{2}(3+\sqrt{5}) +3\sqrt{5}t_2\left(\frac{1}{2}(1+\sqrt{5}) \right),$$
and
$$ H(x)+H(x^2)= 42 +15m. $$
Hence $-a_k\equiv 2 $ mod 5. Since $p\equiv 2$ mod $3$ we have $3\nmid \alpha\beta$ and $\alpha^2+5\beta^2\equiv 0$ mod 3 and $\pm b_k\equiv 0$ mod $3$. But $(a_k \text{ mod }5, b_k \text{ mod }3)$ cycles through $(-2,1)$,$(2,0),$ $(-2,-1)$, $(2,0)$ never $(-2,0)$.
So $p^2$ must have a bad representation.
\end{proof}

\end{document}